\documentclass[a4paper,12pt]{article}

\usepackage{authblk}
\usepackage{amssymb, amsmath, amsthm}
\usepackage{graphicx, tikz, subcaption}
\usetikzlibrary{arrows.meta, positioning, backgrounds}
\usepackage{a4wide}

\newtheorem{theorem}{Theorem}[section]
\newtheorem{conjecture}[theorem]{Conjecture}
\newtheorem{corollary}[theorem]{Corollary}

\newtheorem{proposition}[theorem]{Proposition}
\theoremstyle{definition}
\newtheorem{definition}[theorem]{Definition}
\newtheorem{remark}[theorem]{Remark}
\newtheorem{observation}[theorem]{Observation}

\begin{document}

\title{{\bf Solid-Resolving Sets on Directed Graphs}\footnote{This paper has been presented in part in IWOCA 2026~\cite{IWOCA2026}.}}
\author[1,2]{Anni Hakanen\footnote{Corresponding author. Email: \texttt{anehak@utu.fi}.}}
\author[1,3]{Pavan P D}

\affil[1]{Department of Mathematics and Statistics, University of Turku, Finland}
\affil[2]{Turku Collegium for Science, Medicine and Technology (TCSMT), University of Turku, Finland}
\affil[3]{CIMFAV \& Instituto de Ingenier\'{i}a Matem\'{a}tica, Universidad de Valpara\'{i}so, Chile}

\date{}

\maketitle

\begin{abstract}
A set $S \subseteq V(G)$ is a solid-resolving set of an undirected graph $G$ if each vertex can be distinguished from any vertex subset by their distances from the set $S$. Solid-resolving sets are a generalization of resolving sets, which have been studied extensively in the literature. We extend this concept to oriented graphs and directed graphs. In this setting, we first define solid-resolving sets for digraphs and then consider vertices that must be included in any solid-resolving set. We also study the solid-metric dimension, which is the minimum cardinality of a solid-resolving set of a given digraph. We determine the possible values of the solid-metric dimension for several common graph classes, including tournaments, directed trees, oriented cycles, complete bipartite graphs, as well as for Cartesian products of oriented paths and cycles. We also show via some examples that there is no trivial relation between the solid-metric dimension of a digraph and the solid-metric dimension of its underlying undirected graph. Furthermore, we show that the solid-metric dimension can increase or decrease by an arbitrary $k$ when one vertex is removed. Similarly, removing a single arc can decrease the solid-metric dimension by an arbitrary~$k$; however, it can increase by at most 1. The arbitrary changes in solid-metric dimension indicate that the solid-metric dimension is a global, rather than a local, property.

\textbf{Keywords: }Digraph, Oriented graph, Metric dimension, Solid-resolving set, Solid-metric dimension.
\end{abstract}

\section{Introduction}
	
	A resolving set is a vertex set that distinguishes all vertices of the graph by looking at their distances from the vertices of the resolving set.
	Formally, we say that a vertex $v$ \emph{distinguishes} distinct vertices $x$ and $y$ of a graph $G$ if $d(v, x) \neq d(v, y)$, where $d(u, v)$ is the length of a shortest path from $u$ to $v$.
	A set $S \subseteq V(G)$ is a \emph{resolving set} of the graph~$G$ if, for every pair of distinct vertices $u,v \in V(G)$, there exists a vertex $s \in S$ such that $d(s, u) \neq d(s, v)$.
	The minimum cardinality of a resolving set of $G$ is called the \emph{metric dimension} of $G$.
	
	Resolving sets and metric dimension were introduced for undirected graphs in the 1970s by Slater~\cite{S:leavesTree} and Harary and Melter~\cite{Harary76}, and have since been studied extensively~\cite{KuziakYeroSurvey,TillquistSurvey}. Over the years, the study has expanded to variations of the original concept: for example, one can distinguish edges instead of vertices~\cite{KelencEdge18}, use a different distance than the traditional graph distance~\cite{JannesariAdjacencyLexi,SaenpholphatDetour}, or consider more general graphs by including weights~\cite{EpsteinWeighted} or edge directions~\cite{ChartrandDirectedMD}. See the surveys~\cite{KuziakYeroSurvey,TillquistSurvey} for more information on metric dimension and its variants.

	Considering metric dimension in the context of digraphs raises the question of how to define the distance between two vertices when there is no directed path from the first vertex to the second, i.e., the second vertex is not reached from the first vertex. Metric dimension was first defined for digraphs by Chartrand, Raines, and Zhang~\cite{ChartrandDirectedMD}, who dealt with this issue by avoiding it altogether! Their definition requires that every vertex of the resolving set is reachable from every vertex outside the resolving set. Consequently, resolving sets, and hence the metric dimension, are undefined for many digraphs under this definition. This definition of resolving sets in digraphs has its applications, however, in~\cite{FengDirectedMD,PancahayaniDirectedMD,RajanDirectedMD,SchmitzDirectedMD,VetrikDirectedMD}, for example.
	
	Another approach to answering the question was presented more recently in~\cite{AraujoOrientations,BensmailOrientedMD}: the distance from $u$ to $v$ is defined to be $\infty$ when $v$ is not reachable from $u$. In this case, the infinite distance can be used to distinguish vertices. If $x$ is reachable from $v$ but $y$ is not, then $d(v,x) < d(v,y)$ under this convention, and $v$ distinguishes $x$ and $y$. This interpretation of infinite distance makes it possible to extend the definitions of resolving sets and metric dimension to all digraphs. 
	
	The \emph{solid-metric dimension} was introduced for undirected graphs in~\cite{Solid}. Similar to classical resolving sets, solid-resolving sets can distinguish distinct vertices from each other; however, they can also distinguish any single vertex from a set of vertices. Let the distance from a vertex $s$ to a set of vertices $Y$ be defined as $d(s,Y) = \min_{y \in Y} d(s,y)$. For an undirected graph $G$, the set $S \subseteq V(G)$ is a solid-resolving set if, for every $x \in V(G)$ and every non-empty set $Y \subseteq V(G)$, with $Y \neq \{x\}$, there exists $s \in S$ such that $d(s,x) \neq d(s,Y)$.

    Before defining solid-resolving sets for digraphs, we first present some basic definitions and notation. A \emph{digraph} (or \emph{directed graph}) $D$ consists of a vertex set $V(D)$ and an arc set $A(D)$. An arc is an ordered pair of vertices, and an arc from $u$ to $v$ is denoted by $\overrightarrow{uv}$ or $(u,v)$. The digraphs considered in this paper are simple, that is, without loops $\overrightarrow{uu}$. An \emph{orientation} of an undirected graph $G$ is a digraph where each edge of $G$ is given a direction (and bidirected edges are forbidden). When it is necessary to distinguish an undirected graph $G$ from one of its orientations, we may use the arrow notation $\overrightarrow{G}$; however, we usually denote digraphs without the arrow.

    The \emph{distance} from $u$ to $v$ is defined as the number of arcs in a shortest directed path from $u$ to $v$, and is denoted by $d_D(u, v)$ (where the subscript $D$ may be omitted if the context is clear). If no such directed path exists, then $v$ is not \emph{reachable} from $u$, and $d(u, v) = \infty$. The distance from a vertex $u$ to a vertex set $X \subseteq V(D)$ is defined as $d(u, X) = \min_{x \in X} d(u, x)$. If no directed path exists from $u$ to any vertex in $X$, then $d(u, X) = \infty$.
    
	We now define solid-resolving sets for digraphs as follows.
	
	\begin{definition}\label{def-solid}
		Let $D$ be a digraph. A set $S \subseteq V(D)$ is a solid-resolving set of $D$ if for every vertex $v \in V(D)$ and every non-empty subset $X \subseteq V(D)$ with $X \neq \{v\}$, there exists $w \in S$ such that $d(w, v) \neq d(w, X)$. The minimum cardinality of a solid-resolving set is called the solid-metric dimension of $D$, and is denoted by $\beta_S(D)$.
	\end{definition}
	
    In some papers, resolving sets are defined by using distance arrays or metric representations. The same can be done with solid-resolving sets as follows. Let $S = \{s_1, \dots, s_k\} \subseteq V(D)$. The \emph{distance array} (or \emph{metric representation}) of a vertex $v$ with respect to $S$ is defined as $\mathcal{D}_S(v) = (d(s_1, v), \dots, d(s_k, v))$. Similarly, the distance array of a set $X$ with respect to $S$ is $\mathcal{D}_S(X) = (d(s_1, X), \dots, d(s_k, X))$. Now, the set $S \subseteq V(D)$ is a solid-resolving set of $D$ if, for every vertex $v \in V(D)$ and every non-empty subset $X \subseteq V(D)$, the equality $\mathcal{D}_S(v) = \mathcal{D}_S(X)$ implies that $X = \{v\}$.
	
	In this work, we study solid-resolving sets and the solid-metric dimension of digraphs. In Section~\ref{sec-basicresults}, we generalize some basic results for solid-resolving sets from undirected graphs to digraphs. Our exposition places special emphasis on \emph{forced vertices}, that is, vertices which belong to every solid-resolving set. In Section~\ref{sec-graphfamilies}, we consider digraphs and orientations whose underlying graphs are trees, cycles, complete bipartite graphs or complete graphs. We establish bounds on their solid-metric dimensions and determine the exact values in some cases. In Section~\ref{sec-graphoperations}, we prove that removing a vertex can change the solid-metric dimension by an arbitrary $k$. We prove that removing an arc can decrease the solid-metric dimension by an arbitrarily large amount, whereas any increase is at most~1. In Section~\ref{sec-smdundirected}, we show that the difference between the solid-metric dimension of a digraph and that of its underlying undirected graph can be arbitrarily large. Finally, in Section~\ref{sec-cartesianproducts}, we consider the solid-metric dimension of Cartesian products of oriented paths and cycles. Section~\ref{sec-conclusions} concludes the paper.

\section{Basic Results}\label{sec-basicresults}

We begin by generalizing some basic results on solid-resolving sets from~\cite{Solid} to digraphs. The following theorem gives us a useful characterization of solid-resolving sets, which we shall make use of extensively henceforth.

\begin{theorem}
\label{thm:srsmain}
    Let $D$ be a nontrivial digraph. A set $S \subseteq V(D)$ is a solid-resolving set of $D$ if and only if for all distinct $u, v \in V(D)$, there is an element $s \in S$ such that $d(s, u) < d(s, v)$.
\end{theorem}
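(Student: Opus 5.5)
We prove the two implications separately. Throughout we use the convention that $\infty$ is larger than every finite distance, and that $d(s,X)=\min_{x\in X} d(s,x)$ takes the value $\infty$ exactly when no vertex of $X$ is reachable from $s$.

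For the forward direction, assume $S$ is a solid-resolving set and let $u,v$ be distinct vertices. We apply Definition~\ref{def-solid} to the vertex $v$ and the set $X=\{u,v\}$, which is non-empty and different from $\{v\}$. This gives some $s\in S$ with $d(s,v)\neq d(s,\{u,v\})=\min\{d(s,u),d(s,v)\}$. Since the minimum is always at most $d(s,v)$, the inequality forces $\min\{d(s,u),d(s,v)\}=d(s,u)<d(s,v)$. In particular $d(s,u)$ is finite, and $s$ is the required element. Ties with $\infty$ cause no trouble: if both distances were $\infty$, they would be equal, contradicting the choice of $s$.

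For the converse, assume that for every ordered pair of distinct vertices $(u,v)$ some $s\in S$ satisfies $d(s,u)<d(s,v)$. Let $v\in V(D)$ and let $X$ be a non-empty set with $X\neq\{v\}$. We split into two cases.
\begin{itemize}
\item If $v\notin X$, pick any $u\in X$, so $u\neq v$. The hypothesis applied to $(u,v)$ gives $s\in S$ with $d(s,X)\le d(s,u)<d(s,v)$, hence $d(s,v)\neq d(s,X)$.
\item If $v\in X$, then since $X\neq\{v\}$ there is some $u\in X\setminus\{v\}$. The hypothesis applied to $(u,v)$ gives $s\in S$ with $d(s,X)\le d(s,u)<d(s,v)$, so again $d(s,v)\neq d(s,X)$.
\end{itemize}
Both cases are therefore handled by the same argument: choose any $u\in X\setminus\{v\}$, which is non-empty in either case.

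The argument relies only on the monotonicity $d(s,X)\le d(s,u)$ for $u\in X$ and on the total order of $\mathbb{N}\cup\{\infty\}$, so no real obstacle is expected. The one point needing care is to keep the strict inequality meaningful when some distances are infinite. This is guaranteed because $d(s,u)<d(s,v)$ forces $d(s,u)$ to be finite. The nontriviality assumption on $D$ merely ensures that distinct pairs exist, so the condition is not vacuous.
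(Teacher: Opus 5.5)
Your proof is correct and follows the paper's argument: the forward direction applies the definition to $v$ with the test set $X=\{u,v\}$ (you argue directly, the paper by contradiction), and the converse picks $u\in X\setminus\{v\}$ and uses $d(s,X)\le d(s,u)<d(s,v)$. As you note yourself, the case split on whether $v\in X$ is unnecessary, since $X\setminus\{v\}$ is non-empty in both cases.
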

\begin{proof}
    Suppose first that $S$ is a solid-resolving set. To prove the claim, let us assume on the contrary that there exist vertices $u, v \in V(D)$ such that $d(s, v) \leq d(s, u)$ for all $s \in S$. Let $X = \{u, v\}$. This implies that $d(s, X) = \min (d(s, u), d(s, v)) = d(s, v)$ for all $s \in S$. This contradicts the assumption that $S$ is a solid-resolving set.

    To prove the converse, let $v \in V(D)$ and let $X \subseteq V(D)$ be a non-empty set, with $X \neq \{v\}$. Let $u \in X \setminus \{v\}$. By the hypothesis, there exists a vertex $s \in S$ such that $d(s, u) < d(s, v)$. But since $d(s, X) \leq d(s, u)$, we have $d(s, X) < d(s, v)$, and thus, $d(s, X) \neq d(s, v)$. 
    Hence, $S$ is a solid-resolving set.  
\end{proof}

Notice that Theorem~\ref{thm:srsmain} implies that each vertex of the digraph is reachable from some vertex of its solid-resolving set. Thus, if the underlying undirected graph of $D$ is not connected, we can determine the solid-resolving sets of $D$ by looking at these components separately. Indeed, the union of solid-resolving sets of the components is always a solid-resolving set of $D$. Moreover, if $S$ is a solid-resolving set of $D$ and $D_i$ is a component of~$D$, then $S\cap V(D_i)$ must be a solid-resolving set of $D_i$ since the condition $d(s,u)<d(s,v)$ cannot be satisfied for vertex pairs $u,v \in V(D_i)$ by any $s \notin V(D_i)$. In particular, if $D_i \simeq K_1$, then this singleton vertex must always be included in any solid-resolving set of $D$. Therefore, we mainly focus on nontrivial digraphs whose underlying graphs are connected.

\begin{remark}[Computational complexity]
    When using Definition~\ref{def-solid}, verifying that a given set is a solid-resolving set requires one to compare each vertex to $2^n-2$ sets. This leads to an algorithm with exponential time complexity. However, we need only $\mathcal{O}(n^2)$ comparisons when using Theorem~\ref{thm:srsmain}. Thus, there exists a polynomial-time algorithm that checks whether a given set is a solid-resolving set of the given digraph. Consequently, the decision problem asking whether $\beta_S(D) \leq k$ for a given digraph $D$ and integer $k$ is in NP. The same decision problem was proven to be NP-hard for undirected graphs in~\cite{Solid}. Since undirected graphs are a subclass of digraphs, it follows that this decision problem is NP-complete for digraphs.
\end{remark}

In~\cite{Solid}, it was noted that some vertices must belong to all solid-resolving sets, and thus they can be used to prove lower bounds on the solid-metric dimension. We make a similar observation in the context of digraphs.

\begin{definition}
    A vertex $u \in V(D)$ is called a \emph{forced vertex} of $D$ with respect to solid-resolving sets if every solid-resolving set of $D$ contains $u$.
\end{definition}

The \emph{open in-neighborhood} $N^-_{D}(v)$ (resp. \emph{open out-neighborhood} $N^+_{D}(v)$) of a vertex~$v$ is defined as $N^-_{D}(v) = \{u \in V(D) \mid \overrightarrow{uv} \in A(D)\}$ (resp., $N^+_{D}(v) = \{u \in V(D) \mid \overrightarrow{vu} \in A(D)\}$), and the \emph{closed in-neighborhood} $N^-_{D}[v]$ (\emph{closed out-neighborhood} $N^+_{D}[v]$) as $N^-_{D}[v] = N^-_{D}(v) \cup \{v\}$ (resp., $N^+_{D}[v] = N^+_{D}(v) \cup \{v\}$). The \emph{in-degree} $\deg^-_{D}(v)$ (resp., \emph{out-degree} $\deg^+_{D}(v)$) of a vertex $v$ is the cardinality of the open in-neighborhood $N^-_{D}(v)$ (resp., open out-neighborhood $N^+_{D}(v)$). Whenever the context is clear, we usually drop the subscript~$D$ for convenience. A vertex is called a \emph{source} if it has no in-neighbors, and a \emph{sink} if it has no out-neighbors.

The next theorem gives us a characterization of forced vertices. In the proof, we implicitly use the fact that any superset of a solid-resolving set is also a solid-resolving set.

\begin{theorem}\label{thm:forced-characterization}
    Let $D$ be a non-trivial digraph. A vertex $u \in V(D)$ is a forced vertex of $D$ if and only if there exists a vertex $v \in V(D) \setminus \{u\}$ such that $N^-(u) \subseteq N^-[v]$.
\end{theorem}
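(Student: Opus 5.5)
Both directions go through the characterization in Theorem~\ref{thm:srsmain}, together with the fact that supersets of solid-resolving sets are solid-resolving. The key observation is that for any $s \neq u$, every shortest path from $s$ to $u$ enters $u$ through an in-neighbor. Hence
\[
d(s,u) = 1 + \min_{w \in N^-(u)} d(s,w),
\]
with the value $\infty$ if no in-neighbor is reachable or $N^-(u)=\emptyset$.

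\emph{Sufficiency.} Suppose $N^-(u) \subseteq N^-[v]$ for some $v \neq u$. I claim that for every $s \neq u$ we have $d(s,v) \le d(s,u)$.
\begin{itemize}
\item If $d(s,u)=\infty$, the claim is trivial.
\item If $s = v$, then $d(s,v)=0$ and the claim holds.
\item Otherwise $s \neq u,v$ and a shortest $s$--$u$ path exists. Its predecessor $w$ of $u$ lies in $N^-(u) \subseteq N^-[v]$, so $d(s,u) = d(s,w)+1$. Either $w = v$, giving $d(s,v) = d(s,u)-1$, or $w \in N^-(v)$, giving $d(s,v) \le d(s,w)+1 = d(s,u)$.
\end{itemize}
Now take any solid-resolving set $S$. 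Applying Theorem~\ref{thm:srsmain} to the ordered pair $(u,v)$ gives some $s\in S$ with $d(s,u)<d(s,v)$. By the claim, no $s \neq u$ can do this, so $u \in S$. Thus $u$ is forced.

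\emph{Necessity.} I argue the contrapositive. Assume that for every $v \neq u$ there is some $a_v \in N^-(u)\setminus N^-[v]$. I will show that $S = V(D)\setminus\{u\}$ is solid-resolving, so $u$ is not forced. By Theorem~\ref{thm:srsmain}, for distinct $x,y$ we need some $s\in S$ with $d(s,x)<d(s,y)$.
\begin{itemize}
\item If $x \neq u$, take $s=x$: then $d(s,x)=0<d(s,y)$ since $y\neq x$.
\item If $x = u$ and $y \neq u$, take $s = a_y$. Then $s \neq u$ (no loops), so $s\in S$. We have $d(s,u)=1$. Since $s\notin N^-[y]$, we get $s \neq y$ and no arc $\overrightarrow{sy}$, so $d(s,y)\ge 2 > 1$.
\end{itemize}
Hence $S$ is solid-resolving and $u$ is not forced.

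Nontriviality of $D$ guarantees that some $v\neq u$ exists. The only delicate step is the sufficiency claim, specifically the shortest-path argument through the in-neighbor when $w=v$ versus $w\in N^-(v)$. The case $N^-(u)=\emptyset$ is automatically covered: the hypothesis then holds for any $v$, and $u$ is reachable only from itself, so it is forced.
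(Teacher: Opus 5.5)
Your proof is correct and follows the paper's argument. Sufficiency uses the in-neighbor $w$ on a shortest $s$--$u$ path to get $d(s,v)\le d(s,u)$ for all $s\neq u$, and necessity shows that $V(D)\setminus\{u\}$ is solid-resolving by means of a vertex of $N^-(u)\setminus N^-[y]$. The differences are cosmetic: you apply Theorem~\ref{thm:srsmain} directly to an arbitrary solid-resolving set instead of refuting $V(D)\setminus\{u\}$ and invoking superset monotonicity, and you handle the cases $d(s,u)=\infty$ and $w=v$ explicitly.
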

\begin{proof}
    First, suppose that $u$ and $v$ are distinct vertices in $D$ with $N^-(u) \subseteq N^-[v]$. We will show that the set $S = V(D) \setminus \{u\}$ is not a solid-resolving set, which implies that every solid-resolving set of $D$ must include $u$. Assume, to the contrary, that $S$ is a solid-resolving set. By Theorem~\ref{thm:srsmain}, there exists an element $s \in S$ such that $d(s, u) < d(s, v)$. Since $s \neq u$, any shortest path from $s$ to $u$ must pass through some $w \in N^-(u)$, so we can write $d(s, u) = d(s, w) + 1$. But since $w \in N^-(u) \subseteq N^-[v]$, we have $d(s, v) \leq d(s, w) + 1 = d(s, u)$, which is a contradiction. Thus, the set $S$ and any of its subsets are not solid-resolving sets.

    Conversely, assume that $u$ is a forced vertex of $D$, but $N^-(u) \nsubseteq N^-[v]$ for all $v \in V(D) \setminus \{u\}$. We will show that $S = V(D) \setminus \{u\}$ is a solid-resolving set, contradicting our assumption that $u$ is forced. For any distinct $s \in S$ and $t \in V(D) \setminus \{s\}$ we trivially have $d(s, s) < d(s, t)$. Thus, it remains to distinguish $u$ from any $s \in S$. Since $N^-(u) \nsubseteq N^-[s]$ for any $s \in S$, we can choose a vertex, say $t \in N^-(u) \setminus N^-[s]$. Then, $d(t, u) = 1$ and $d(t, s) \geq 2$, and thus $d(t, u) < d(t, s)$. Hence, by Theorem~\ref{thm:srsmain}, $S$ is a solid-resolving set, a contradiction.
\end{proof}

\begin{corollary}\label{cor:forceddegleq1}
    Let $D$ be a non-trivial digraph. A vertex $u \in V(D)$ with $\deg^-(u) \leq 1$ is a forced vertex of $D$.
\end{corollary}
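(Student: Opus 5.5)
The plan is to apply Theorem~\ref{thm:forced-characterization} directly. By that result, it suffices to exhibit a vertex $v \in V(D) \setminus \{u\}$ with $N^-(u) \subseteq N^-[v]$. I will split into two cases according to whether $\deg^-(u) = 0$ or $\deg^-(u) = 1$.

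If $\deg^-(u) = 0$, then $N^-(u) = \emptyset$. Since $D$ is non-trivial, there is some vertex $v \neq u$. The inclusion $\emptyset \subseteq N^-[v]$ holds trivially, so $u$ is forced.

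If $\deg^-(u) = 1$, write $N^-(u) = \{w\}$. Loops are excluded, so $w \neq u$. Taking $v = w$ gives $N^-(u) = \{w\} \subseteq N^-[w]$, because the closed in-neighbourhood contains $w$ itself. Theorem~\ref{thm:forced-characterization} then shows that $u$ is forced.

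There is no real obstacle here. The only point needing care is that the witness $v$ must differ from $u$. In the first case this is guaranteed by non-triviality of $D$; in the second it follows from simplicity of $D$ (no loops).
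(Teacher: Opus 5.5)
Your proposal is correct and follows the paper's proof: both split into the cases $\deg^-(u)=0$ and $\deg^-(u)=1$ and apply Theorem~\ref{thm:forced-characterization}, with the witness being an arbitrary other vertex in the first case and the unique in-neighbour in the second. Your explicit check that the witness differs from $u$ (by non-triviality of $D$ in the first case and by the absence of loops in the second) is a detail the paper leaves implicit.
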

\begin{proof}
    If $\deg^-(u) = 0$, then for any $v \in V(D) \setminus \{u\}$, we have $N^-(u) \subseteq N^-[v]$, and hence by Theorem~\ref{thm:forced-characterization}, $u$ is a forced vertex. Let $\deg^-(u) = 1$ and $N^-(u) = \{v\}$. Then $N^-(u) (= \{v\}) \subseteq (\{v\} \cup N^-(v)) = N^-[v]$. Hence, by Theorem~\ref{thm:forced-characterization}, $u$ is a forced vertex of~$D$.
\end{proof}

In~\cite{Solid}, the relationship between solid-resolving sets and the boundary of a graph was also explored. A definition of the boundary for digraphs was presented in~\cite{ChangatBoundary}. However, we present a different definition for the boundary, which is more along the lines of the original definition for undirected graphs in~\cite{ChartrandBoundary}. This definition allows us to generalize results regarding solid-resolving sets and the boundary from the undirected case.

The \emph{boundary} of a digraph $D$ is the set $\partial(D) = \{v \in V(D) \mid \exists u \in V(D), \forall w \in N^+(v) : d(u, w) \leq d(u, v)\}$. Vertices in $\partial(D)$ are called \emph{boundary vertices}. The boundary of a graph is always non-empty, for if the maximum distance between any pair of vertices in $D$ is equal to $d(u, v)$, then $v$ will always be a boundary vertex. That is, $|\partial(D)| \geq 1$. Notice that if $v$ is not reachable from some $u$, that is, if $d(u,v) = \infty$, then $v \in \partial(D)$. Thus, if the underlying undirected graph of $D$ is not connected, then $\partial(D) = V(D)$.

The \emph{transpose} of a digraph $D$ is another digraph $D^T$ obtained by reversing all arcs, that is, $\overrightarrow{uv} \in A(D)$ if and only if $\overrightarrow{vu} \in A(D^T)$.

\begin{theorem}
Let $D$ be a finite digraph.
    \begin{enumerate}
        \item The boundary of $D^T$ is a solid-resolving set of $D$.
        \item Every forced vertex of $D$ is a boundary vertex of $D^T$.
        \item There exist graphs that have solid-resolving sets of minimum cardinality that are not subsets of the boundary of the transpose. 
    \end{enumerate}
\end{theorem}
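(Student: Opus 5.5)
The plan is to first rewrite the boundary of $D^T$ in terms of distances in $D$. Since $N^+_{D^T}(x)=N^-_D(x)$ and $d_{D^T}(a,b)=d_D(b,a)$, a vertex $x$ lies in $\partial(D^T)$ if and only if there is a vertex $y$ such that every in-neighbour $w$ of $x$ in $D$ satisfies $d_D(w,y)\le d_D(x,y)$. All three parts will be argued in $D$ using this reformulation.

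For part 1, I will use Theorem~\ref{thm:srsmain}. Fix distinct $u,v$ and let $T=\{x\in V(D): d(x,u)<d(x,v)\}$. This set is nonempty because $u\in T$. Since $D$ is finite, I choose $x\in T$ with $d(x,v)$ maximal, allowing the value $\infty$. I then claim $x\in\partial(D^T)$ with witness $y=v$. Take any $w\in N^-(x)$.
\begin{itemize}
\item If $w\in T$, then $d(w,v)\le d(x,v)$ by maximality.
\item If $w\notin T$, then $d(w,v)\le d(w,u)\le d(x,u)+1\le d(x,v)$. The last step holds because $d(x,u)<d(x,v)$ and both values are integers, or $d(x,v)=\infty$.
\end{itemize}
In both cases $d(w,v)\le d(x,v)$, so $x\in\partial(D^T)\cap T$ distinguishes $u$ from $v$ in the required sense. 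Hence $\partial(D^T)$ is a solid-resolving set. The only delicate point is the bookkeeping with infinite distances; the ordering $\infty\ge$ everything makes the argument go through unchanged.

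Part 2 follows immediately from part 1. A forced vertex lies in every solid-resolving set, and in particular in $\partial(D^T)$. As a sanity check, there is also a direct argument from Theorem~\ref{thm:forced-characterization}. If $N^-(u)\subseteq N^-[v]$, take $y=v$. Then each $w\in N^-(u)$ is either $v$, giving $d(w,v)=0$, or an in-neighbour of $v$, giving $d(w,v)=1\le d(u,v)$ because $u\ne v$.

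Part 3 needs an explicit example, and I expect finding and verifying it to be the main obstacle. The natural first attempt is to take an undirected example from~\cite{Solid}, where a minimum solid-resolving set is not contained in the boundary, and view it as a symmetric digraph. In that case $D^T=D$, and distances and our boundary definition agree with the undirected ones. If no such example is readily available, I would search small oriented graphs on 4--6 vertices. These should have few forced vertices but a large boundary in $D^T$, so that a minimum set can exploit a non-boundary vertex. I would then verify the example directly: list $\partial(D^T)$ via the reformulation above, find forced vertices with Corollary~\ref{cor:forceddegleq1} and Theorem~\ref{thm:forced-characterization} to fix the lower bound, and check the pair condition of Theorem~\ref{thm:srsmain} for a minimum set containing a vertex outside $\partial(D^T)$.
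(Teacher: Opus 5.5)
Your parts 1 and 2 are correct. Part 3, however, is not proved: it is an existence claim, and your proposal gives no example.

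\textbf{Parts 1 and 2.} In part 1 you take a different route from the paper. The paper argues iteratively. It starts at $u$ and repeatedly passes to an in-neighbour $w$ (in $D$) with $d_D(w,v)$ strictly larger, keeping $d_D(w,u)<d_D(w,v)$, until it reaches a vertex of $\partial(D^T)$. Your extremal choice of $x\in T$ maximizing $d(x,v)$ is exactly where such a walk must stop. Your version makes termination and the infinite-distance cases automatic, and it checks boundary membership directly with the single witness $y=v$. The paper's iteration, by contrast, uses one common $w'$ for both $t=u$ and $t=v$. The quantifiers do not directly give such a $w'$, although only $t=v$ is actually needed. Deriving part 2 from part 1 is also shorter than the paper's direct argument via Theorem~\ref{thm:forced-characterization}, which coincides with your sanity check.

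\textbf{Part 3.} Your reduction to symmetric digraphs is sound in principle. For such $D$ we have $D^T=D$, and distances, the boundary and solid-resolving sets agree with the undirected notions. But you neither name an undirected graph from~\cite{Solid} with the required property nor verify one, so the claim is left unestablished.

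The paper settles it with the six-vertex oriented graph of Figure~\ref{fig:boundarynotsubset}. Its arcs are $1\to 4$, $1\to 5$, $6\to 1$, $6\to 3$, $6\to 4$, $3\to 2$, $2\to 5$, $2\to 6$, $4\to 3$, $5\to 3$. The verification has four steps:
\begin{enumerate}
\item Vertices $1,2,6$ have in-degree $1$, and $N^-(4)=\{1,6\}\subseteq N^-[1]$, so $1,2,4,6$ are forced.
\item No $s\in\{1,2,4,6\}$ satisfies $d(s,3)<d(s,4)$, so $\beta_S(D)\geq 5$.
\item The set $\{1,2,3,4,6\}$ is solid-resolving, because $d(1,5)<d(1,3)<d(1,2)<d(1,6)$ and $d(2,5)<d(2,1)=d(2,4)$.
\item $3\notin\partial(D^T)$. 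Here $N^-(3)=\{4,5,6\}$, and for every $y$ some $w\in\{4,5,6\}$ has $d_D(w,y)>d_D(3,y)$: take $w=4$ when $y\in\{1,2,5,6\}$, $w=5$ when $y=4$, and any $w$ when $y=3$.
\end{enumerate}

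To complete your write-up you need an explicit example checked at these same points. You need a lower bound from forced vertices plus a pair they fail to separate, and an upper bound via Theorem~\ref{thm:srsmain}. Finally, you must exhibit a vertex of the minimum set that lies outside $\partial(D^T)$.
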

\begin{proof}
1. Let $u, v \in V(D)$ be distinct vertices. By Theorem~\ref{thm:srsmain}, to prove that $\partial(D^T)$ is a solid-resolving set, it suffices to find a vertex $s \in \partial(D^T)$ such that $d_D(s, u) < d_D(s, v)$, or equivalently $d_{D^T}(u, s) < d_{D^T}(v, s)$.
    
If $u \in \partial(D^T)$, since it is trivially true that $d_D(u, u) < d_D(u, v)$, we are done. So let $u \notin \partial(D^T)$. By definition of the boundary, for every vertex $t$, there exists a vertex $w \in N^+(u)$ such that $d_{D^T}(t, w) > d_{D^T}(t, u)$. In particular, we have $d_{D^T}(v, w) > d_{D^T}(v, u)$, which implies $d_{D^T}(v, w) \geq d_{D^T}(v, u) + 1 = d_{D^T}(v, u) + d_{D^T}(u, w)$, since $w \in N^+(u)$. But by the definition of distance, we have $d_{D^T}(v, w) \leq d_{D^T}(v, u) + d_{D^T}(u, w)$, and hence, we have $d_{D^T}(v, w) = d_{D^T}(v, u) + d_{D^T}(u, w)$. Thus, $d_{D^T}(u, w) < d_{D^T}(v, w)$, or equivalently, $d_D(w, u) < d_D(w, v)$. If $w \in \partial(D^T)$, we are done. If not, then for each $t \in V(D^T)$, there exists a vertex $w' \in N^+(w)$ such that $d_{D^T}(t, w') > d_{D^T}(t, w)$. In particular, we have $d_{D^T}(u, w') > d_{D^T}(u, w)$ and $d_{D^T}(v, w') > d_{D^T}(v, w)$. By applying the same arguments as above, we can show that $d_{D^T}(v, w') = d_{D^T}(v, w) + d_{D^T}(w, w')$ and $d_{D^T}(u, w') = d_{D^T}(u, w) + d_{D^T}(w, w')$. But since $d_{D^T}(v, w) = d_{D^T}(v, u) + d_{D^T}(u, w)$, we can write $d_{D^T}(v, w') = d_{D^T}(v, w) + d_{D^T}(w, w') = d_{D^T}(v, u) + d_{D^T}(u, w) + d_{D^T}(w, w') = d_{D^T}(v, u) + d_{D^T}(u, w')$. This implies that $d_{D^T}(u, w') < d_{D^T}(v, w')$, that is, $d_D(w', u) < d_D(w', v)$. Again, if $w' \in \partial(D^T)$, then we are done. Otherwise, we continue this process, and since $D$ is a finite graph and $\partial(D^T)$ is always non-empty, we eventually find a vertex $s \in \partial(D^T)$ such that $d_D(s, u) < d_D(s, v)$. Thus, $\partial(D^T)$ is a solid-resolving set of $D$.

\smallskip
2. Let $u$ be a forced vertex of $D$. By Theorem~\ref{thm:forced-characterization}, there exists a vertex $v \neq u$ such that $N^-(u) \subseteq N^-[v]$ in $D$. If $w \in N^-(u)$, then $w \in N^-[v]$ and hence, $d_D(w, v) \leq 1$ for all $w \in N^-(u)$. However, $d_D(u, v) \geq 1$ since $u$ and $v$ are distinct vertices. Hence, $d_D(w, v) \leq d_D(u, v)$ for all $w \in N^-(u)$ in $D$. This implies that $d_{D^T}(v, w) \leq d_{D^T}(v, u)$ for all $w \in N^+(u)$ in $D^T$. By definition of the boundary, this shows that $u \in \partial(D^T)$.

\smallskip
3. Consider the oriented graph $D$ in Figure~\ref{fig:boundarynotsubset}. From the figure, we have $\partial(D^T) = \{1, 2, 4, 5, 6\}$. Next, we will show that $\beta_S(D) = 5$. Observe that $\deg^-(1) = \deg^-(2) = \deg^-(6) = 1$ and $N^-(4) \subseteq N^-[1]$. Hence, by Corollary~\ref{cor:forceddegleq1} and Theorem~\ref{thm:forced-characterization}, we have that $1, 2, 4$ and $6$ are forced vertices. However, the set $S' = \{1, 2, 4, 6\}$ is not a solid-resolving set since there does not exist any $s \in S'$ such that $d(s, 3) < d(s, 4)$. We claim that in fact $S = \{1, 2, 3, 4, 6\}$ is a solid-resolving set of $D$. Observe that forced vertices distinguish themselves, and $d(1, 5) < d(1, 3) < d(1, 2) < d(1, 6)$ and $d(2, 5) < d(2, 1) = d(2, 4)$. Hence, by Theorem~\ref{thm:srsmain}, $S$ is a solid-resolving set of $D$. However, since $3 \notin \partial(D^T)$, this minimum solid-resolving set is not contained in the boundary of the transpose.
\end{proof}

\begin{figure}
		\centering
		
		\begin{tikzpicture}[>=Stealth, node distance=2cm,
			unfilled/.style={circle, draw, minimum size=2mm, inner sep=1pt},
			filled/.style={circle, draw, fill=black, minimum size=2mm, inner sep=1pt},
			labelstyle/.style={font=\small}
			]
			
			
			\begin{scope}[xshift=-3.5cm]
				
				\node (1) at (0,2)   {};
				\node (2) at (2,0)  {};
				\node (3) at (1,1)  {};
				\node (4) at (-1,1)  {};
				\node (5) at (2,2)  {};
				\node (6) at (0,0)   {};
				
				\foreach \n in {1,2,3,4,6} {
					\node[filled] at (\n) {};
				}
				\node[unfilled] at (5) {};
				
				\node[labelstyle, above]      at (1) {1};
				\node[labelstyle, below] 	  at (2) {2};
				\node[labelstyle, above left] at (3) {3};
				\node[labelstyle, left]       at (4) {4};
				\node[labelstyle, above]	  at (5) {5};
				\node[labelstyle, below] 	  at (6) {6};
				
				\draw[->, thick] (1) -- (4);
				\draw[->, thick] (1) -- (5);
				\draw[->, thick] (6) -- (1);
				\draw[->, thick] (3) -- (2);
				\draw[->, thick] (2) -- (5);
				\draw[->, thick] (2) -- (6);
				\draw[->, thick] (4) -- (3);
				\draw[->, thick] (5) -- (3);
				\draw[->, thick] (6) -- (3);
				\draw[->, thick] (6) -- (4);
				
				\node at (.5,-1) {$D$};
				
			\end{scope}

			
			\begin{scope}[xshift=3.5cm]
				
				\node (1) at (0,2)   {};
				\node (2) at (2,0)  {};
				\node (3) at (1,1)  {};
				\node (4) at (-1,1)  {};
				\node (5) at (2,2)  {};
				\node (6) at (0,0)   {};
				
				\foreach \n in {1,2,4,5,6} {
					\node[filled] at (\n) {};
				}
				\node[unfilled] at (3) {};
				
				\node[labelstyle, above]      at (1) {1};
				\node[labelstyle, below] 	  at (2) {2};
				\node[labelstyle, above left] at (3) {3};
				\node[labelstyle, left]       at (4) {4};
				\node[labelstyle, above]	  at (5) {5};
				\node[labelstyle, below] 	  at (6) {6};
				
				\draw[<-, thick] (1) -- (4);
				\draw[<-, thick] (1) -- (5);
				\draw[<-, thick] (6) -- (1);
				\draw[<-, thick] (3) -- (2);
				\draw[<-, thick] (2) -- (5);
				\draw[<-, thick] (2) -- (6);
				\draw[<-, thick] (4) -- (3);
				\draw[<-, thick] (5) -- (3);
				\draw[<-, thick] (6) -- (3);
				\draw[<-, thick] (6) -- (4);
				
				\node at (.5,-1) {$D^T$};
				
			\end{scope}
			
		\end{tikzpicture}
		
		\caption{An oriented graph $D$ where a minimum solid-resolving set is not contained in the boundary of $D^T$. The filled vertices in $D$ are a minimum solid-resolving set, and the filled vertices in $D^T$ are its boundary vertices.}
		\label{fig:boundarynotsubset}
	\end{figure}

\section{Graph Families}\label{sec-graphfamilies}
We utilize the characterizations of solid-resolving sets and forced vertices established in the previous section to study the solid-metric dimension of digraphs whose underlying graphs are trees and orientations of complete graphs, cycles, and complete bipartite graphs.

\subsection{Complete graphs}
We first turn our attention to the solid-metric dimension of tournaments. Let $\overrightarrow{K_n}$ be an orientation of the complete graph on $n$ vertices.
\begin{observation}
\label{obs:completedegleq2}
    If $\deg^-(u) \leq 2$ for some vertex $u \in V(\overrightarrow{K_n})$, then $u$ is a forced vertex of~$\overrightarrow{K_n}$.
\end{observation}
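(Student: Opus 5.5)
The plan is to reduce to Theorem~\ref{thm:forced-characterization}: it suffices to exhibit a vertex $v \neq u$ with $N^-(u) \subseteq N^-[v]$. The cases $\deg^-(u) \leq 1$ are already covered by Corollary~\ref{cor:forceddegleq1}, so only the case $\deg^-(u) = 2$ remains. We may also assume $n \geq 2$, so that the tournament is non-trivial, as Theorem~\ref{thm:forced-characterization} requires.

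Let $\deg^-(u) = 2$ and write $N^-(u) = \{a, b\}$ with $a \neq b$. The key fact specific to tournaments is that $a$ and $b$ are adjacent, so exactly one of the arcs $\overrightarrow{ab}$ and $\overrightarrow{ba}$ is present. By symmetry, assume $\overrightarrow{ab} \in A(\overrightarrow{K_n})$. Then $a \in N^-(b)$, and trivially $b \in N^-[b]$. Hence
\[
N^-(u) = \{a, b\} \subseteq N^-[b].
\]
Since $b \in N^-(u)$, we have $b \neq u$. Theorem~\ref{thm:forced-characterization} with $v = b$ then shows that $u$ is a forced vertex.

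There is no real obstacle here. The only step that uses the tournament structure is the observation that the two in-neighbours of $u$ are joined by an arc. The choice of $v$ must be the head of that arc, not its tail. This also explains why the bound $2$ is the natural limit of this argument: with three in-neighbours there need not be one that dominates the other two.
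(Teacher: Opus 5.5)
Your proposal is correct and follows the paper's proof exactly: handle $\deg^-(u)\le 1$ by Corollary~\ref{cor:forceddegleq1}, and when $N^-(u)=\{a,b\}$, use the arc between $a$ and $b$ to get $N^-(u)\subseteq N^-[v]$ with $v$ the head of that arc, then apply Theorem~\ref{thm:forced-characterization}. Your non-triviality assumption ($n\ge 2$) is the same implicit hypothesis the paper relies on.
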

\begin{proof}
    If $\deg^-(u) \leq 1$, then by Corollary~\ref{cor:forceddegleq1}, $u$ is a forced vertex. Let $N^-(u) = \{v, w\}$, and without loss of generality, let $\overrightarrow{wv} \in A(\overrightarrow{K_n})$. Then $N^-(u) \subseteq N^-[v]$ and hence, $u$ is a forced vertex.
\end{proof}

For complete graphs with a small number of vertices, it is possible to completely determine the possible solid-metric dimensions of their orientations. 
\begin{proposition}
    We have $\beta_S(\overrightarrow{K_3}) = 3$, $\beta_S(\overrightarrow{K_4}) \geq 3$, and $ \beta_S(\overrightarrow{K_5}) \geq 4$.
\end{proposition}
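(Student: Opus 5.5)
The plan is to use Observation~\ref{obs:completedegleq2} as the main tool. In any tournament, a vertex of in-degree at most $2$ is forced, so it suffices to show that enough vertices have small in-degree. The key counting fact is that $\sum_{u} \deg^-(u) = \binom{n}{2}$, since every edge of $K_n$ contributes exactly one in-degree.

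For $\overrightarrow{K_3}$, every vertex has in-degree at most $2$, because it has only two other vertices. By Observation~\ref{obs:completedegleq2}, all three vertices are forced. Hence $\beta_S(\overrightarrow{K_3}) \geq 3$. The whole vertex set is trivially a solid-resolving set, since each vertex $u$ satisfies $d(u,u)=0<d(u,v)$. Therefore $\beta_S(\overrightarrow{K_3}) = 3$.

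For $\overrightarrow{K_4}$, the in-degrees sum to $6$ and each is at most $3$. Suppose at most two vertices had in-degree $\leq 2$. Then at least two vertices would have in-degree $3$. But two vertices of in-degree $3$ would each need an arc from the other, which is impossible in an orientation. So at most one vertex has in-degree $3$, and the other three have in-degree $\leq 2$. These three are forced, giving $\beta_S(\overrightarrow{K_4}) \geq 3$.

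For $\overrightarrow{K_5}$, the in-degrees sum to $10$ and each is at most $4$. I need at least four vertices of in-degree $\leq 2$, or else a supplementary argument.
\begin{itemize}
\item \emph{Two vertices of in-degree $4$.} This is impossible, by the same reason as for $\overrightarrow{K_4}$.
\item \emph{Exactly one vertex of in-degree $4$.} The remaining four vertices have total in-degree $6$. Removing the in-degree-$4$ vertex leaves a $\overrightarrow{K_4}$ whose in-degrees sum to $6$ and are unchanged. In that $\overrightarrow{K_4}$, at most one vertex has in-degree $3$. So at least three of the four vertices have in-degree $\leq 2$.
\item \emph{A vertex of in-degree $3$.} Here counting alone may fail. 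For example, the in-degree sequence $(3,3,2,1,1)$ has only three vertices of in-degree $\leq 2$.
\end{itemize}

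So the main obstacle is the case of exactly three forced vertices. In that case I must show that the three forced vertices do not form a solid-resolving set. I will then use Theorem~\ref{thm:srsmain}: for the two unforced vertices $a$ and $b$ (each of in-degree $\geq 3$), I find an ordered pair that no forced vertex separates. Say the arc is $\overrightarrow{ab}$. In a tournament every distance is $0$, $1$, $2$ or $\infty$.

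Now examine when a forced vertex $s$ satisfies $d(s,b)<d(s,a)$.
\begin{itemize}
\item If $d(s,a)=1$, then $d(s,b)\leq 2$, and the inequality needs $d(s,b)=1$.
\item If $d(s,a)=2$, then $d(s,b)\leq 2$ as well, again through $a$, and the inequality needs $d(s,b)=1$.
\end{itemize}
Because $b$ has in-degree at least $3$ with $a$ among its in-neighbours, $b$ has at least two in-neighbours among the three forced vertices. Likewise, $a$ has at least three in-neighbours, and these lie among the forced vertices since $b$ is not one. The plan is to check, using $\overrightarrow{ab}$ and these in-neighbourhoods, that no forced $s$ gives $d(s,a)<d(s,b)$. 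If $s\to a$, then $s\to b$ or $s$ reaches $b$ via $a$ in two steps. If $s$ is not an in-neighbour of $a$, that $s$ being the unique one, I check directly that $d(s,a)=2\geq d(s,b)$.

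This shows that $b$ cannot be distinguished from $a$ in the required direction. Hence at least four vertices are needed, and $\beta_S(\overrightarrow{K_5}) \geq 4$. If the short-path check turns out delicate, I will fall back on enumerating the few in-degree sequences $(3,3,2,1,1)$ and $(3,3,2,2,0)$ and the orientations realizing them.
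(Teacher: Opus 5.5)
The $\overrightarrow{K_3}$ and $\overrightarrow{K_4}$ parts are correct and match the paper. The $\overrightarrow{K_5}$ part has a genuine gap.

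\textbf{A missing case.} You tacitly assume that at least three vertices have in-degree at most $2$, so that the only hard case is ``three forced vertices $F$ and two unforced vertices $a,b$''. This assumption fails. Let $x$ be a source with $\overrightarrow{xy}$, and let $x$ and $y$ both dominate a cyclic triangle $a\to b\to c\to a$. The in-degrees are $(0,1,3,3,3)$, so only two vertices have in-degree $\le 2$. Your supplementary argument therefore does not apply, and this sequence is also absent from your fallback enumeration. Here all five vertices are in fact forced, e.g.\ $N^-(a)=\{x,y,c\}\subseteq N^-[c]$. Your in-degree-$4$ bullet also yields only three forced vertices, so it too silently relies on the supplementary argument.

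\textbf{The supplementary argument is garbled.} The unseparated ordered pair is $(b,a)$. From $\overrightarrow{ab}$ and $\deg^-(a)\ge 3$ you get $N^-(a)=F$. Hence $d(s,a)=1$ for every $s\in F$, and $d(s,b)<d(s,a)$ would force $s=b$. Several of your stated steps are wrong:
\begin{itemize}
\item Your goal, that no $s\in F$ has $d(s,a)<d(s,b)$, is false. If $\deg^-(b)=3$, the $s\in F$ with $\overrightarrow{bs}$ has $d(s,a)=1<2=d(s,b)$.
\item The claim that tournament distances lie in $\{0,1,2,\infty\}$ is false in general. The arcs $v_1\to v_2\to v_3\to v_4$, $v_3\to v_1$, $v_4\to v_1$, $v_4\to v_2$ give $d(v_1,v_4)=3$.
\item The subcase ``$s$ not an in-neighbour of $a$'' cannot occur, by your own count.
\end{itemize}

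\textbf{The repair.} Reason about an arbitrary putative solid-resolving set rather than the set of low-in-degree vertices. Suppose $|S|\le 3$, and pick $a,b\notin S$ with $\overrightarrow{ab}$. Then $a$ is unforced, so $\deg^-(a)\ge 3$ by Observation~\ref{obs:completedegleq2}. Since $b\notin N^-(a)$, this gives $N^-(a)=V(\overrightarrow{K_5})\setminus\{a,b\}$. Hence $N^-(b)\subseteq V(\overrightarrow{K_5})\setminus\{b\}=N^-[a]$, so $b$ is forced by Theorem~\ref{thm:forced-characterization}, a contradiction. This handles every in-degree sequence at once and is essentially the paper's proof.
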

\begin{proof}
    No vertex of $K_3$ has degree greater than 2, and hence, by Observation~\ref{obs:completedegleq2}, all vertices of $\overrightarrow{K_3}$ are forced. That is, $\beta_S(\overrightarrow{K_3}) = 3$. 

    Let us assume that there exists an orientation $\overrightarrow{K_4}$ of $K_4$ such that $\beta_S(\overrightarrow{K_4}) \leq 2$. Let $V(\overrightarrow{K_4}) = \{v_1, v_2, v_3, v_4\}$, and without loss of generality, let $v_1$ and $v_2$ be two vertices which are not in some solid-resolving set of $\overrightarrow{K_4}$. By Observation~\ref{obs:completedegleq2}, we have that $3 \leq \deg^-(v_1), \deg^-(v_2) \leq 3$, that is $N^-(v_1) = \{v_2, v_3, v_4\}$ and $N^-(v_2) = \{v_1, v_3, v_4\}$. But this is a contradiction since $\overrightarrow{v_1 v_2}, \overrightarrow{v_2 v_1} \in A(\overrightarrow{K_4})$, while $\overrightarrow{K_4}$ is an orientation. Hence, $\beta_S(\overrightarrow{K_4}) \geq 3$.

    Let us assume that there exists an orientation $\overrightarrow{K_5}$ of $K_5$ such that $\beta_S(\overrightarrow{K_5}) \leq 3$. Let $V(\overrightarrow{K_5}) = \{v_1, v_2, v_3, v_4, v_5\}$, and without loss of generality, let $v_1$ and $v_2$ be two vertices which are not in some solid-resolving set of $\overrightarrow{K_5}$. By Observation~\ref{obs:completedegleq2}, we have that $3 \leq \deg^-(v_1), \deg^-(v_2) \leq 4$. If $\deg^-(v_1) = 4$, then $N^-(v_i) \subseteq N^-[v_1]$ for $2 \leq i \leq 5$. By Theorem~\ref{thm:forced-characterization}, this implies that  $\beta_S(\overrightarrow{K_5}) \geq 4$, a contradiction. Hence, $\deg^-(v_1) = \deg^-(v_2) = 3$. If $v_2 \in N^-(v_1)$, then without loss of generality, we must have $N^-(v_1) = \{v_2, v_3, v_4\}$ and $N^-(v_2) = \{v_3, v_4, v_5\}$. But now $N^-(v_1) \subset N^-[v_2]$ and by Theorem~\ref{thm:forced-characterization}, $v_1$ is a forced vertex, a contradiction. Hence, $v_2 \notin N^-(v_1)$. Now if $v_1 \notin N^-(v_2)$, then $N^-(v_1) = N^-(v_2)$, which leads to a contradiction by Theorem~\ref{thm:forced-characterization} as before. Therefore, without loss of generality, we must have $N^-(v_1) = \{v_3, v_4, v_5\}$ and $N^-(v_2) = \{v_1, v_3, v_4\}$. But then $N^-(v_2) \subset N^-[v_1]$, which leads to a contradiction by Theorem~\ref{thm:forced-characterization}. Hence, $\beta_S(\overrightarrow{K_5}) \geq 4$.
\end{proof}

While the maximum possible solid-metric dimension that an orientation of $K_n$ can attain can be shown by the following result to be equal to $n$, it is not trivial to determine the minimum possible solid-metric dimension. However, randomized computer searches have given us the bounds in Conjecture~\ref{conj:compgraphs}, which we believe are true.

\begin{proposition}
\label{prop:compub}
    There exists an orientation $\overrightarrow{K_n}$ such that $\beta_S(\overrightarrow{K_n}) = n$.
\end{proposition}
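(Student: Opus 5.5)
I would use the transitive tournament. Take vertices $v_1, \dots, v_n$ and orient every edge $v_iv_j$ with $i<j$ as $\overrightarrow{v_iv_j}$. This is an orientation of $K_n$ in which $N^-(v_j) = \{v_1, \dots, v_{j-1}\}$ for every $j$. The plan is to show that every vertex of this tournament is forced. The lower bound $\beta_S(\overrightarrow{K_n}) \geq n$ then follows directly from the definition of a forced vertex.

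For the upper bound, $V(\overrightarrow{K_n})$ itself is a solid-resolving set by Theorem~\ref{thm:srsmain}: for distinct $u,v$ we can take $s=u$, since $d(u,u)=0<d(u,v)$. Hence $\beta_S(\overrightarrow{K_n}) \leq n$. The case $n=1$ is trivial, because the single vertex must be in any solid-resolving set, so assume $n \geq 2$.

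For the lower bound, first consider $v_1$. It is a source, so $\deg^-(v_1)=0$, and it is forced by Corollary~\ref{cor:forceddegleq1}. Now take $2 \leq j \leq n$ and compare $v_j$ with $v_{j-1}$. We have
\[ N^-(v_j) = \{v_1,\dots,v_{j-1}\} = N^-(v_{j-1}) \cup \{v_{j-1}\} = N^-[v_{j-1}]. \]
Since $v_{j-1} \neq v_j$, Theorem~\ref{thm:forced-characterization} shows that $v_j$ is forced. So every vertex is forced, every solid-resolving set equals $V(\overrightarrow{K_n})$, and $\beta_S(\overrightarrow{K_n}) = n$.

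I expect no real obstacle here. The only work is choosing a construction in which each vertex's in-neighbourhood sits inside the closed in-neighbourhood of another vertex, and the transitive ordering provides exactly this nested chain of in-neighbourhoods.
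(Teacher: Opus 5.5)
Your proof is correct and is essentially the paper's argument. The paper uses the same transitive tournament, with arcs directed from higher to lower index, so the chain $N^-(v_i) = N^-[v_{i+1}]$ runs the other way; it applies Theorem~\ref{thm:forced-characterization} along the chain and Corollary~\ref{cor:forceddegleq1} to the source to conclude that all $n$ vertices are forced.

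One small quibble concerns your aside on $n=1$. Under Definition~\ref{def-solid} the condition is vacuous for a single vertex, so $\emptyset$ is already a solid-resolving set and your justification for that case is wrong. That degenerate case should simply be excluded, as the paper implicitly does, rather than argued.
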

\begin{proof}
    Let $\overrightarrow{K_n}$ be the oriented graph with $V(\overrightarrow{K_n}) = \{v_1, v_2, \dots, v_n\}$ and $A(\overrightarrow{K_n}) = \{\overrightarrow{v_j v_i} \colon j > i\}$. Observe that $N^-(v_i) \subseteq N^-[v_{i + 1}]$, for $i \in \{1, \dots, n - 1\}$. Hence, by Theorem~\ref{thm:forced-characterization}, the vertices $\{v_1, \dots, v_{n - 1}\}$ are forced vertices of a solid-resolving set of $\overrightarrow{K_n}$. Moreover, since $\deg^-(v_n) = 0$, by Corollary~\ref{cor:forceddegleq1}, $v_n$ is a forced vertex as well. Hence, we have $\beta_S(\overrightarrow{K_n}) = n$.
\end{proof}

\begin{conjecture}
\label{conj:compgraphs}
    We have $\beta_S(\overrightarrow{K_6}) \geq 4$, $\beta_S(\overrightarrow{K_7}) \geq 5$, $ \beta_S(\overrightarrow{K_8}) \geq 5$, $\beta_S(\overrightarrow{K_9}) \geq 5$, $\beta_S(\overrightarrow{K_{10}}) \geq 5$ and $\beta_S(\overrightarrow{K_{11}}) \geq 6$.
\end{conjecture}

We have also observed during our computer searches that there exist orientations of $K_n$ which attain every possible solid-metric dimension between our conjectured lower bounds and the upper bound given by Proposition~\ref{prop:compub}. Determining a lower bound for arbitrary $n$ and proving that our observation holds in each case, would be an interesting topic for future research.

\subsection{Trees}
We next turn our attention to digraphs whose underlying undirected graphs are trees, and determine their solid-metric dimensions in the following theorem. We denote by $n_i^-(D)$ the number of vertices $u$ with $\deg^- (u) = i$ in $D$.

\begin{theorem}
\label{thm:treessmd}
		Let $D$ be a finite digraph whose underlying graph is a tree $T$ with $n$ vertices. The set of forced vertices is a solid-metric basis of $D$ and $\beta_S (D) = n^-_0 (D) + n^-_1 (D)$.
	\end{theorem}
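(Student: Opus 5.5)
The plan is to show two things: the set $F=\{u\in V(D) : \deg^-(u)\le 1\}$ is a solid-resolving set, and no vertex of in-degree at least $2$ is forced. Corollary~\ref{cor:forceddegleq1} shows every vertex of $F$ is forced. Together these give that $F$ is exactly the set of forced vertices and that it is a solid-resolving set contained in every solid-resolving set. Hence it is a solid-metric basis, and $\beta_S(D)=|F|=n_0^-(D)+n_1^-(D)$.

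\emph{Vertices of in-degree at least $2$ are not forced.} Let $\deg^-(u)\ge 2$ with distinct in-neighbours $a,b$. By Theorem~\ref{thm:forced-characterization}, it suffices to check that no $v\ne u$ satisfies $\{a,b\}\subseteq N^-[v]$. There are two cases, each contradicting that the underlying graph $T$ is a tree.
\begin{itemize}
\item If $v\notin\{a,b\}$, then $v$ is adjacent to both $a$ and $b$, so $u,a,v,b$ form a $4$-cycle in $T$.
\item If, say, $v=a$, then $b$ is adjacent to $a$, so $a,b,u$ form a triangle in $T$.
\end{itemize}

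\emph{$F$ is solid-resolving.} We use Theorem~\ref{thm:srsmain}. Take distinct $u,v$. If $u\in F$, then $s=u$ works. Otherwise $\deg^-(u)\ge2$. Removing $u$ from $T$ splits it into branches, and $v$ lies in exactly one of them. Since $u$ has at least two in-neighbours, some in-neighbour $w_1$ of $u$ lies in a branch $B$ not containing $v$.

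Now walk backwards from $w_1$. As long as the current vertex $w_i$ has in-degree at least $2$, it has an in-neighbour other than $w_{i-1}$ (with $w_0=u$), and we take that as $w_{i+1}$. Because $T$ is a tree, this walk never revisits a vertex and never leaves $B$. By finiteness it stops at some $s=w_k$ with $\deg^-(s)\le1$, so $s\in F$. By construction, $w_k\to w_{k-1}\to\dots\to w_1\to u$ is a directed path, so $d(s,u)$ is finite.

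Every path from $s$ to $v$ in $T$ passes through $u$, so every directed $s$--$v$ path does too. Hence
\[
d(s,v)\ge d(s,u)+d(u,v)>d(s,u),
\]
and this also holds when $d(s,v)=\infty$.

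The main obstacle is the backward walk: it must reach a vertex of $F$ while staying inside a branch that avoids $v$. The key point making this work is that a vertex of in-degree at least $2$ always has an in-neighbour different from the vertex we arrived from.
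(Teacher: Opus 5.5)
Your proof is correct and follows the paper's argument. Both identify the forced vertices with those of in-degree at most $1$; then, for $u$ of in-degree at least $2$, both take an in-neighbour of $u$ off the $u$--$v$ path and backtrack along in-arcs to a vertex $s$ of in-degree at most $1$, which satisfies $d(s,u)<d(s,v)$ because every $s$--$v$ path in the tree passes through $u$.

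Two small remarks. Your rule $w_{i+1}\neq w_{i-1}$ makes the backtracking more careful than the paper's sketch, which does not explicitly rule out stepping back to $u$ along a bidirected arc. Also, your separate 4-cycle/triangle argument that in-degree-$\ge 2$ vertices are not forced is valid but unnecessary: once $F$ is shown to be solid-resolving, every forced vertex already lies in $F$.
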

	\begin{proof}
		Let $S$ be the set of forced vertices of $D$. Since the underlying undirected graph of~$D$ is a tree, a vertex $v$ is a forced vertex in $D$ if and only if $\deg^-(v) \leq 1$. In particular, if $v$ is a leaf in the underlying tree $T$, then $v \in S$. Thus, $|S| = n^-_1(D) + n^-_0(D)$, and it suffices to prove that $S$ is a solid-resolving set.
		
		In order to prove that $S$ is a solid-metric basis of $D$, it is sufficient to consider distinct~$u$ and~$v$ such that $u \notin S$, and prove that $d(s,u) < d(s,v)$ for some $s \in S$. Since $u \notin S$, there are at least two distinct in-neighbors of $u$, say $w_1$ and $w_2$. Consider briefly the undirected tree $T$. One of $w_1$ and $w_2$ may be on a path from $u$ to $v$, but at least one of them is not, say $w_1$. Thus, the path from $w_1$ to $v$ in $T$ contains~$u$. Since $w_1$ is an in-neighbor of $u$ in $D$, if there exists a directed path from $w_1$ to $v$ then it goes through~$u$. Thus, $d_D(w_1,u) < d_D(w_1,v)$. If no such directed path exists, then $d_D(w_1,v)=\infty$, and $d_D(w_1,u) < d_D(w_1,v)$. If $w_1 \in S$, then we are done. Otherwise, $w_1$ has at least two in-neighbors. Since the underlying graph of $D$ is a tree, if there exist directed paths from the in-neighbors of $w_1$ to $v$, then those paths go through $u$. Thus, when we continue this backtracking, we only meet vertices $w$ for which $d_D(w,u) < d_D (w,v)$ holds. Eventually, we will find some element of $S$, since at least the leaves of $T$ are forced vertices in $D$ and~$T$ is finite.
	\end{proof}
    
The previous result allows us to determine all the possible values of the solid-metric dimension for one of the simplest types of trees - paths. 
\begin{corollary}
\label{cor:pathsmd}
    Let $\overrightarrow{P_n}$ be a digraph whose underlying graph is the path on $n$ vertices $P_n$. We have $2 \leq \beta_S(\overrightarrow{P_n}) \leq n$.
\end{corollary}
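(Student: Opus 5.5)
By Theorem~\ref{thm:treessmd}, since $P_n$ is a tree, $\beta_S(\overrightarrow{P_n}) = n^-_0(\overrightarrow{P_n}) + n^-_1(\overrightarrow{P_n})$. The upper bound is then immediate, because this sum counts a subset of the $n$ vertices. Alternatively, the whole vertex set $V(\overrightarrow{P_n})$ is always a solid-resolving set, since $d(u,u)=0<d(u,v)$ for every $v\neq u$.

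For the lower bound, I will assume $n \geq 2$, since the digraph must be nontrivial. Let the underlying path be $v_1 v_2 \cdots v_n$. The two endpoints $v_1$ and $v_n$ have underlying degree $1$, so their in-degree in $\overrightarrow{P_n}$ is at most $1$. By Corollary~\ref{cor:forceddegleq1}, both are forced vertices. They are distinct, so every solid-resolving set contains at least two vertices, which gives $\beta_S(\overrightarrow{P_n}) \geq 2$.

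The argument has no real obstacle; the only point to watch is the degenerate case $n=1$, which is excluded by nontriviality.

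If the intended statement also asserts that every value in $[2,n]$ is attained, I would add an explicit construction. Every vertex has in-degree at most $2$, and only interior vertices can have in-degree $2$. The dimension therefore equals $n$ minus the number of interior vertices with in-degree $2$, that is, $n$ minus the number of ``sinks'' in the middle of the path.

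Orienting all arcs in one direction leaves no such vertices and gives $\beta_S = n$. Placing $j$ such sinks, alternating with interior sources (possible for any $0 \le j \le \lfloor (n-1)/2 \rfloor$), gives $\beta_S = n-j$. This construction only reaches $n-\lfloor (n-1)/2\rfloor$, so values near $2$ are not attained and I would not claim the full interval. In any case the corollary as stated only requires the two bounds.
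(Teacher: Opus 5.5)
Your proof of the two bounds is correct and is essentially the paper's argument: the paper writes $\beta_S(\overrightarrow{P_n}) = n - n^-_2(\overrightarrow{P_n})$ via Theorem~\ref{thm:treessmd} and uses that only the $n-2$ interior vertices can have in-degree $2$, which is the same fact as your observation that both endpoints are forced. One correction to your side remark on attainability: the statement is about digraphs, so edges may be bidirected, and bidirecting every edge gives $n^-_2(\overrightarrow{P_n}) = n-2$ and hence $\beta_S(\overrightarrow{P_n}) = 2$; the paper uses exactly this to show the lower bound is sharp, and mixing bidirected and one-way edges gives every value in $[2,n]$, so your ceiling of $n-\lfloor (n-1)/2\rfloor$ holds only for orientations.
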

\begin{proof}
    Since a path is also a tree, by Theorem~\ref{thm:treessmd}, we have $\beta_S(\overrightarrow{P_n}) = n^-_0(\overrightarrow{P_n}) + n^-_1(\overrightarrow{P_n}) = n - n^-_2(\overrightarrow{P_n})$. Hence, the extremal values of $\beta_S(\overrightarrow{P_n})$ occur when $n^-_2(\overrightarrow{P_n})$ is minimized or maximized. We can have $n^-_2(\overrightarrow{P_n}) = 0$ when $\overrightarrow{P_n}$ is a simple orientation of the undirected path from the initial vertex to the terminal vertex. We can have $n^-_2(\overrightarrow{P_n}) = n - 2$ (all vertices except the initial and terminal vertices) when all the edges are bidirected, with arcs in both directions.
\end{proof}

By Theorem~\ref{thm:treessmd} and Corollary~\ref{cor:pathsmd}, it is clear that the orientation of $P_n$ that has solid-metric dimension $k$, where $2 \leq k \leq n$, is that which has $n - k$ vertices with in-degree 2. Thus, we can construct directed paths $\overrightarrow{P_n}$ of any desired solid-metric dimension between 2 and $n$.

\subsection{Cycles}
Next, we consider orientations of cycles. As with trees, we can determine their solid-metric dimensions by determining their forced vertices, as demonstrated by the following result.

\begin{theorem}
\label{thm:cyclessmd}
    Let $\overrightarrow{C_n}$ be an orientation of the cycle with $n$ edges. If $n = 3$ or if $n = 4$ and $\overrightarrow{C_n}$ has only sources and sinks, then $\beta_S(\overrightarrow{C_n}) = n$, otherwise, $\beta_S(\overrightarrow{C_n}) = n - n_2^-(\overrightarrow{C_n})$.
\end{theorem}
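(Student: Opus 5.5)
Since an orientation has no bidirected edges, every vertex of $\overrightarrow{C_n}$ has in-degree $0$, $1$ or $2$. Let $S$ be the set of vertices with in-degree at most $1$. By Corollary~\ref{cor:forceddegleq1}, every vertex of $S$ is forced, so $\beta_S(\overrightarrow{C_n}) \geq |S| = n - n_2^-(\overrightarrow{C_n})$. The plan has two parts. First, settle the two exceptional cases, where every vertex is forced. Second, in all other cases, show that $S$ itself is a solid-resolving set via Theorem~\ref{thm:srsmain}, which gives the matching upper bound.

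\textbf{The exceptional cases.} For $n=3$, every vertex has in-degree at most $2$, so all three vertices are forced by Observation~\ref{obs:completedegleq2} and $\beta_S = 3$. For $n=4$ with only sources and sinks, write the cycle as $a,u,b,v$ with $a,b$ sources and $u,v$ sinks. The sources are forced by Corollary~\ref{cor:forceddegleq1}. For the sink $u$ we have $N^-(u)=\{a,b\}\subseteq N^-[v]$, so $u$ is forced by Theorem~\ref{thm:forced-characterization}, and symmetrically $v$ is forced. Hence $\beta_S = 4$.

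\textbf{The upper bound in the remaining cases.} The key structural fact is the following. If $u$ has in-degree $2$ with in-neighbors $a$ and $b$, then $a \in S$. Indeed, $a$ has only two neighbours on the cycle, namely $u$ and one other vertex, and the arc between $a$ and $u$ points out of $a$, so $\deg^-(a)\le 1$. The same holds for $b$. Now take distinct $u,v$ and look for $s\in S$ with $d(s,u)<d(s,v)$. If $u\in S$, take $s=u$. Otherwise $a,b\in S$ and $d(a,u)=d(b,u)=1$. If $v\in\{a,b\}$, say $v=a$, then $b\neq a$ and $d(b,a)\ge 1$. To get the strict inequality with $s=b$, we need $d(b,a)\ge 2$. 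Since $n\ge 4$ here, $a$ and $b$ are not adjacent, so this holds. If $v\notin\{a,b,u\}$, the inequality fails for both $s=a$ and $s=b$ only if $v$ is an out-neighbour of both $a$ and $b$. This means $v$ is a common neighbour of $a$ and $b$ other than $u$, which forces $n=4$. In that case $u$ and $v$ are sinks and $a,b$ are sources, which is exactly the excluded case.

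\textbf{Main obstacle.} The delicate part is the case bookkeeping. In particular, for $n=4$ the failure of the argument must be shown to occur exactly when the orientation consists of sources and sinks, and the $n=3$ adjacency of $a$ and $b$ must be handled separately, which is done in the first part. Apart from this, the key structural fact makes the argument short: unlike in the tree case, no backtracking is needed.
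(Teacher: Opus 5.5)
Your proof is correct and takes essentially the same route as the paper. Both arguments take the vertices of in-degree at most $1$ as the forced set $S$, and separate each in-degree-$2$ vertex $u$ from every other vertex using its two in-neighbours, which lie in $S$; the only failures are $n=3$ and the $4$-cycle with two sources and two sinks. The one cosmetic difference is that you settle the two exceptional cases up front via Observation~\ref{obs:completedegleq2} and Theorem~\ref{thm:forced-characterization}, whereas the paper obtains them as the failure cases of its distance analysis.
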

\begin{proof}
    By Corollary~\ref{cor:forceddegleq1}, the vertices in the set $S = \{u \in V(\overrightarrow{C_n}) \mid \deg^-(u) \leq 1\}$ are forced vertices. Since, $|S| = n^-_1(\overrightarrow{C_n}) + n^-_0(\overrightarrow{C_n}) = n - n^-_2(\overrightarrow{C_n})$, it suffices to prove that $S$ is a solid-resolving set.

    Observe that in $\overrightarrow{C_n}$, sources and sinks occur in pairs. If $\overrightarrow{C_n}$ does not have any sources or sinks, then $d^-(u) = 1$ for all $u \in V(\overrightarrow{C_n})$. Hence, all vertices of $\overrightarrow{C_n}$ are forced, and the result is true. 
    
    Let us assume that $\overrightarrow{C_n}$ has at least one sink (or at least one source). For any distinct $s \in S$ and $t \in V(\overrightarrow{C_n}) \setminus \{s\}$ we trivially have $d(s, s) < d(s, t)$. Thus, it remains to distinguish any vertex $u \in V(\overrightarrow{C_n}) \setminus S$, that is, any vertex $u$ with $\deg^-(u) = 2$, from every other vertex in $\overrightarrow{C_n}$. Let $N^-(u) = \{p_1, p_2\}$. Clearly, $p_1, p_2 \in S$. Now consider any vertex $x \neq u$. Depending on the distances between $p_1$, $p_2$ and $x$, we have the following possibilities:
    \begin{itemize}
        \item If either $d(p_1, x) > 1$ or $d(p_2, x) > 1$, then we have that either $d(p_1, x) > d(p_1, u)$ or $d(p_2, x) > d(p_2, u)$. Hence, either $p_1$ or $p_2$ distinguishes $u$ from every other vertex $x$, and $S$ is a solid-resolving set.
        \item If $d(p_1, x) = 0$ and $d(p_2, x) = 1$, then $n = 3$ and we have either $N^-(u) \subseteq N^-[p_1]$ or $N^-(u) \subseteq N^-[p_2]$. In either case, by Theorem~\ref{thm:forced-characterization}, $u$ is also forced, and hence $\beta_S(\overrightarrow{C_3}) = 3$. We get a similar result if $d(p_1, x) = 1$ and $d(p_2, x) = 0$.
        \item If $d(p_1, x) = d(p_2, x) = 1$, then $n = 4$ and we have $N^-(u) = N^-(x)$. Hence, by Theorem~\ref{thm:forced-characterization}, $u$ and $x$ are also forced, and therefore $\beta_S(\overrightarrow{C_4}) = 4$ when $\overrightarrow{C_4}$ has two sources and two sinks. \qedhere
    \end{itemize}
\end{proof}

Theorem~\ref{thm:cyclessmd} precisely determines the solid-metric dimension of any oriented cycle. Using this result, the following corollary is quite straightforward.
\begin{corollary}
\label{cor:cyclessmd}
    Let $\overrightarrow{C_n}$ be an orientation of the cycle with $n$ edges. If $n \geq 5$, then $\lceil \frac{n}{2} \rceil \leq \beta_S(\overrightarrow{C_n}) \leq n$.
\end{corollary}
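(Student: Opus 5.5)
By Theorem~\ref{thm:cyclessmd}, for $n \geq 5$ we have $\beta_S(\overrightarrow{C_n}) = n - n_2^-(\overrightarrow{C_n})$. The exceptional cases $n=3$ and $n=4$ are excluded by the hypothesis, so no case distinction is needed there. The corollary therefore reduces to bounding $n_2^-(\overrightarrow{C_n})$ between $0$ and $\lfloor n/2 \rfloor$. Since $n - \lfloor n/2 \rfloor = \lceil n/2 \rceil$, this gives exactly the claimed range.

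The upper bound is immediate, because $n_2^-(\overrightarrow{C_n}) \geq 0$. For the lower bound I would show that no two vertices of in-degree $2$ are adjacent in the underlying cycle. Suppose $u$ and $v$ are consecutive on the cycle and joined by a single oriented edge, say $\overrightarrow{uv}$. Since $\overrightarrow{C_n}$ is an orientation, the arc $\overrightarrow{vu}$ is absent. Hence $v \notin N^-(u)$, and $\deg^-(u) \leq 1$ because $u$ has only two neighbours on the cycle. So the endpoint at the tail of every edge has in-degree at most $1$, and the in-degree-$2$ vertices (the sinks) form an independent set in $C_n$.

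Independent sets in $C_n$ have size at most $\lfloor n/2 \rfloor$. This follows either by pairing each chosen vertex with its clockwise successor, which is unchosen and distinct for different chosen vertices, or by counting incidences: each vertex covers $2$ of the $n$ edges and no edge is covered twice. Therefore $n_2^-(\overrightarrow{C_n}) \leq \lfloor n/2 \rfloor$ and $\beta_S(\overrightarrow{C_n}) \geq \lceil n/2 \rceil$.

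No step looks genuinely hard. The only point needing care is that ``orientation'' forbids bidirected edges, which is what makes in-degree-$2$ vertices independent. I would also remark that both bounds are attained: the directed cycle has $n_2^- = 0$, and alternating sources and sinks (adjusted by one directed edge when $n$ is odd) gives $n_2^- = \lfloor n/2 \rfloor$. This sharpness remark is optional for proving the stated inequality.
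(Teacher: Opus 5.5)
Your proof is correct, and it takes a different route from the paper's.

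The paper argues only by exhibiting the two extremal orientations:
\begin{itemize}
\item the unidirectional cycle, with $n_2^-(\overrightarrow{C_n})=0$;
\item the orientation whose consecutive arcs alternate in direction, with $n_2^-(\overrightarrow{C_n})=\lfloor n/2\rfloor$.
\end{itemize}
It then reads off $\beta_S$ from Theorem~\ref{thm:cyclessmd}. This shows that both bounds are attained. It leaves implicit why no orientation can have more than $\lfloor n/2\rfloor$ vertices of in-degree $2$, which is exactly what the lower bound for an arbitrary orientation requires.

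Your argument supplies that missing step. The tail of every arc has in-degree at most $1$, so the in-degree-$2$ vertices form an independent set of $C_n$, and hence there are at most $\lfloor n/2\rfloor$ of them. The upper bound then follows at once from $n_2^-\ge 0$.

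The paper's constructions, in turn, give the sharpness that you mention only as an optional remark. Taken together, the two arguments show that $\lceil n/2\rceil$ and $n$ are exactly the extreme values of $\beta_S(\overrightarrow{C_n})$ for $n\ge 5$.
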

\begin{proof}
    The upper bound can be obtained by considering the unidirectional oriented cycle (which has no sources or sinks). Since in this case, $n_2^-(\overrightarrow{C_n}) = 0$, we have from Theorem~\ref{thm:cyclessmd} that $\beta_S(\overrightarrow{C_n}) = n$. 

    To prove the lower bound, consider the oriented cycle $\overrightarrow{C_n}$ where consecutive arcs are oriented in opposite directions. In this graph, every vertex is either a source or a sink (except maybe one vertex), that is $n_2^-(\overrightarrow{C_n}) = \lfloor \frac{n}{2} \rfloor = n_0^-(\overrightarrow{C_n})$. Hence, from Theorem~\ref{thm:cyclessmd}, we have $\beta_S(\overrightarrow{C_n}) = \lceil \frac{n}{2} \rceil$.
\end{proof}

Similar to the case of directed paths, in general, the solid-metric dimension of oriented cycles depends mainly on the number of sinks in the graph. By Theorem~\ref{thm:cyclessmd} and Corollary~\ref{cor:cyclessmd}, it is clear that the orientation of $C_n$ that has any solid-metric dimension $k$, where $\lceil \frac{n}{2} \rceil \leq k \leq n$, is that which has $n - k$ sinks. 

\subsection{Complete Bipartite Graphs}
Finally, we consider the family of orientations of complete bipartite graphs, where contrary to the observations so far in this section, the set of forced vertices solely may not be a solid-resolving set.

\begin{theorem}
    Let $D$ be an orientation of a complete bipartite graph $K_{m, n}$ and let $F \subseteq V(D)$ be the set of forced vertices, if any exist. If $D - F$ is an orientation of $K_{m', n'}$, where $1 \leq m' \leq m$ and $1 \leq n' \leq n$, then $\beta_S(D) \leq |F| + \min(m', n')$.  
\end{theorem}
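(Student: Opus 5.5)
The plan is to exhibit an explicit solid-resolving set of size $|F| + \min(m',n')$ and verify it with Theorem~\ref{thm:srsmain}. Let the bipartition of $K_{m,n}$ be $A \cup B$, with $|A| = m$ and $|B| = n$. Since $D - F$ is an orientation of $K_{m',n'}$, its two sides are $A' = A \setminus F$ and $B' = B \setminus F$. Without loss of generality let $|A'| = m' \leq n' = |B'|$; the other case is symmetric. I take
\[
S = F \cup A',
\]
which has $|S| = |F| + m' = |F| + \min(m',n')$. So it suffices to show that $S$ is a solid-resolving set.

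By Theorem~\ref{thm:srsmain}, I must show that for all distinct $u,v \in V(D)$ some $s \in S$ satisfies $d(s,u) < d(s,v)$. If $u \in S$, take $s = u$, since $d(u,u) = 0 < d(u,v)$. So assume $u \notin S$. Then $u \in B'$, so in particular $u$ is not a forced vertex of $D$.

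By the converse direction of Theorem~\ref{thm:forced-characterization}, $N^-(u) \nsubseteq N^-[v]$ for every $v \neq u$. Fix $v$ and choose $t \in N^-(u) \setminus N^-[v]$. Then $d(t,u) = 1$. Also $t \neq v$ and $t$ is not an in-neighbor of $v$, so $d(t,v) \geq 2$.

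The remaining point is that $t \in S$. Since $D$ is an orientation of a bipartite graph, every in-neighbor of $u \in B$ lies in $A$. Now $A = A' \cup (A \cap F) \subseteq S$, hence $t \in S$ and $t$ is the required distinguishing vertex. This concludes the argument.

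I do not expect a serious obstacle. The only step needing care is this last one: the whole side $A$ opposite to the non-selected vertices $B'$ must be contained in $S$. That is exactly why we put all of $A'$ into $S$ together with $F$, and it relies on arcs existing only between the two parts. The hypothesis $m',n' \geq 1$ just guarantees that $A'$ and $B'$ are well-defined nonempty sides of $D-F$. If either side were empty the bound would follow trivially from the same argument with $S = F \cup A'$ anyway.
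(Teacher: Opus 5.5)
Your proof is correct and follows the paper's argument. You use the same set $S = F \cup A'$, and your distinguishing vertex is an in-neighbour $t \in N^-(u)\setminus N^-[v]$, which exists because $u$ is not forced and lies in $A \subseteq S$. The only difference is that you apply this characterization uniformly to every $v \neq u$, whereas the paper first handles $v \in A$ separately using two in-neighbours of $u$, which makes your version slightly cleaner.
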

\begin{proof}
    Let $D$ be an orientation of a complete bipartite graph $K_{m, n}$. Let $A$ and $B$ denote the sets of $m$ and $n$ vertices in the first and second partitions of $K_{m, n}$, respectively, and let $F \subseteq V(D)$ be the set of forced vertices. We have that $D - F$ is an orientation of $K_{m', n'}$, and without loss of generality, assume that $m' \leq n'$. Let $A'$ and $B'$ denote the sets of $m'$ and $n'$ vertices in the first and second partitions of $K_{m', n'}$, respectively. To prove the theorem, we prove that $S = F \cup A'$ is a solid-resolving set.

    For any distinct vertices $s \in S$ and $t \in V(D) \setminus \{s\}$, we trivially have $d(s, s) < d(s, t)$. Thus, it remains to distinguish any vertex $w \in V(D) \setminus S$, that is, any vertex $w \in B'$, from every other vertex in $D$. Since $w \notin S$, $\textrm{deg}^-(w) \geq 2$ and there exists a vertex $x \in A$ such that $xw$ is an arc in $D$. For any vertex $w' \neq x \in A'$, we then have $d(x, w) < d(x, w')$, that is, $x$ distinguishes $w$ from every vertex except $x$ in $A$. Since $\textrm{deg}^-(w) \geq 2$, there exists another vertex $y \in A$ such that $yw$ is an arc in $D$. Since, $d(y, w) < d(y, x)$, $y$ distinguishes $w$ from $x$. Now let $w'\in B$, with $w'\neq w$. Since $w \notin F$, we must have $N^-(w) \nsubseteq N^-[w']$, that is, there exists a vertex $x \in N^-(w) \setminus N^-[w']$. It follows that $d(x, w) < d(x, w')$. Hence, for every such vertex $w' \in B$, there exists a vertex $x$ that distinguishes $w$ from $w'$, and therefore $S$ is a solid-resolving set, and $\beta_S(D) \leq |F| + \min (|A'|, |B'|) = |F| + \min (m', n')$. 
\end{proof}

While a lower bound for the solid-metric dimension of orientations of complete bipartite graphs $\beta_S(D) \geq |F|$ is straightforward by the definition of forced vertices, it is easy to see that this bound is not tight. It would be interesting to determine tighter upper bounds, perhaps in terms of some other related graph parameter.

\section{Vertex and Arc Deletions}\label{sec-graphoperations}

The question of how vertex and edge deletions affect the metric dimension of a graph was first studied by Chartrand and Zhang in~\cite{ChartrandResolvabilitySurvey}. Eroh, Feit, Kang, and Yi~\cite{ErohEdgeDeletion} proved that the metric dimension of an undirected graph can decrease arbitrarily when one vertex or edge is removed. It can increase arbitrarily when one vertex is removed, however, the increase can be only 2 when one edge is removed. The same upper bound holds for the edge metric dimension, as shown by Wei, Yue, and Chen~\cite{WeiEMDEdgeDeletion}. However, for the connected metric dimension, the removal of one vertex or edge can result in an arbitrary increase or decrease according to Eroh, Kang, and Yi~\cite{ErohConnected}.
	
Motivated by these results, we consider how the solid-metric dimension of a digraph can change when we remove one vertex or arc. We prove that the decrease can be arbitrarily large when we remove one vertex or arc. The increase can also be arbitrarily large when we remove one vertex. However, when we remove one arc, the solid-metric dimension can increase by at most 1.

\subsection{Vertex Deletion}

We construct digraphs whose solid-metric dimension increases or decreases by an arbitrary integer~$k$ when one vertex is removed. These constructions demonstrate that the solid-metric dimension is not a local property but a global one.

\begin{proposition}\label{prop-vert-increase}
    For every integer $k \geq 1$, there exists a digraph $D_k$ with $|V(D_k)| = 2k + 4$ and a vertex $u \in V(D_k)$ such that $\beta_S(D_k - u) - \beta_S(D_k) = k$.
\end{proposition}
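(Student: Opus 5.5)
The plan is to build $D_k$ so that deleting $u$ turns $k+1$ vertices of in-degree $2$ into vertices of in-degree $1$. By Corollary~\ref{cor:forceddegleq1} these become forced in $D_k-u$. The loss of the forced vertex $u$ itself then accounts for the net change of exactly $k$.

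\textbf{Construction.} Take vertices $u, a, b, c$ together with $x_1,\dots,x_k$ and $y_1,\dots,y_k$, which gives $2k+4$ vertices. The arcs are
\[
\overrightarrow{ux_i},\ \overrightarrow{y_ix_i} \ (1\le i\le k), \qquad \overrightarrow{ua},\ \overrightarrow{ba},\ \overrightarrow{bc},\ \overrightarrow{ac}.
\]
Thus $u$, $b$ and every $y_i$ are sources. We have $N^-(x_i)=\{u,y_i\}$, $N^-(a)=\{u,b\}$ and $N^-(c)=\{a,b\}$.

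\textbf{Value of $\beta_S(D_k)$.} The sources $u$, $b$ and $y_1,\dots,y_k$ are forced by Corollary~\ref{cor:forceddegleq1}. The vertex $c$ is forced by Theorem~\ref{thm:forced-characterization}, because $N^-(c)=\{a,b\}\subseteq N^-[a]$.

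Next I check that no $x_i$ and not $a$ are forced, using Theorem~\ref{thm:forced-characterization}. For $x_i$, a witness $v$ would need both $u$ and $y_i$ in $N^-[v]$. But $u$ and $y_i$ have no common out-neighbour other than $x_i$, and neither is an in-neighbour of the other. The same reasoning applies to $a$ with the pair $u,b$: their only common out-neighbour is $a$, and they are not adjacent.

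So $\beta_S(D_k)\ge k+3$. For equality I show that $S=\{u,b,c,y_1,\dots,y_k\}$ is solid-resolving via Theorem~\ref{thm:srsmain}. Vertices of $S$ distinguish themselves, so only $x_i$ and $a$ need checking.
\begin{itemize}
\item $x_i$ against any $v\neq x_i$: $d(y_i,x_i)=1$ while $d(y_i,v)=\infty$.
\item $a$ against $c$: $d(u,a)=1<2=d(u,c)$.
\item $a$ against $x_i$ or $y_i$: $d(b,a)=1$ while $b$ does not reach these vertices.
\item $a$ against $u$ or $b$: $u$ and $b$ are sources, so no vertex other than themselves reaches them; hence $d(u,a)=1<\infty=d(u,b)$ and $d(b,a)=1<\infty=d(b,u)$.
\end{itemize}
Therefore $\beta_S(D_k)=k+3$.

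\textbf{Value of $\beta_S(D_k-u)$.} In $D_k-u$ each $x_i$ has in-degree $1$ and $a$ has in-degree $1$, so they are forced by Corollary~\ref{cor:forceddegleq1}. The $y_i$ and $b$ remain sources, and $c$ is still forced since $N^-(c)\subseteq N^-[a]$ still holds. Every one of the $2k+3$ vertices is forced, so $\beta_S(D_k-u)=2k+3$. The difference is $(2k+3)-(k+3)=k$.

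The main obstacle is the careful accounting: making sure the non-forced vertices of $D_k$ are genuinely not forced, and that the proposed set resolves every pair, in particular $a$ against $c$. Most care is needed in the case analysis of the resolving check above.
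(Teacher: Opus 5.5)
Your construction works and the final count is right, but one step of your resolving check is false as written. For $x_i$ against $v=y_i$, the witness $y_i$ fails, since $d(y_i,y_i)=0<1=d(y_i,x_i)$. Use $u$ for this pair instead: $d(u,x_i)=1<\infty=d(u,y_i)$. Every other pair is handled as you wrote, so $S$ is solid-resolving and $\beta_S(D_k)=k+3$. Your check that $x_i$ and $a$ are not forced is not needed. The $k+3$ forced vertices give the lower bound and $S$ gives the upper bound.

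The mechanism is the same as the paper's. In both graphs the forced vertices already form a basis, and deleting the forced vertex $u$ makes $k+1$ previously unforced vertices forced. The gadget differs. The paper uses a hub $w$, with arcs $\overrightarrow{uw}$ and, for $i\in[1,k+1]$, arcs $\overrightarrow{uu_i},\overrightarrow{wu_i},\overrightarrow{wv_i},\overrightarrow{v_iu_i}$. There each $u_i$ has in-degree $3$, and after deleting $u$ it still has in-degree $2$. It becomes forced only through Theorem~\ref{thm:forced-characterization}, since $N^-(u_i)=\{w,v_i\}\subseteq N^-[v_i]$, and $D_k-u$ stays weakly connected.

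In your version the newly forced vertices drop to in-degree $1$, so Corollary~\ref{cor:forceddegleq1} alone handles them, which makes the verification more elementary. The cost is that $D_k-u$ splits into $k+1$ components, which the statement allows. You also need the triangle on $a,b,c$ only to reach exactly $2k+4$ vertices. Attaching $k+1$ pairs $x_i,y_i$ to $u$ alone would already give difference $k$ on $2k+3$ vertices.
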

\begin{proof}
    Let $k \geq 1$ and $D_k$ be an oriented graph such that $V(D_k) = \{u, w\} \cup \{u_i, v_i \mid i \in [1, k + 1]\}$ and $A(D_k) = \{\overrightarrow{uw}\} \cup \{\overrightarrow{uu_i}, \overrightarrow{wu_i}, \overrightarrow{wv_i}, \overrightarrow{v_i u_i} \mid i \in [1, k + 1]\}$. The graph is illustrated in Figure~\ref{fig:vertexdeletionincrease}. We will show that in this graph $D_k$, when the vertex $u$ is removed, the solid-metric dimension increases by $k$.
    
    We first determine the solid-metric dimension of $D_k$. Observe first that since $\deg^-(v_i) = \deg^-(w) = 1$ and $\deg^-(u) = 0$, where $i \in [1, k + 1]$, by Corollary~\ref{cor:forceddegleq1}, $u, w$ and $v_i, i \in [1, k + 1]$ are forced vertices. Therefore, $\beta_S(D_k) \geq k + 3$. We claim that $S = \{u, w\} \cup \{v_i \mid i \in [1, k + 1]\}$ is a solid-resolving set and hence $\beta_S(D_k) = k + 3$. Observe that if $s$ is a forced vertex, then $d(s, s) < d(s, t)$ for any other vertex $t \in V(D_k)$. Thus, by Theorem~\ref{thm:srsmain}, to show that $S$ is a solid-resolving set, we only need to show that for all $u_i, t \in V(D_k), t \neq u_i$, there exists a vertex $s \in S$ such that $d(s, u_i) < d(s, t)$, where $i \in [1, k + 1]$. Since $d(v_i, x) = 1 \iff x = u_i$, we have $d(v_i, u_i) < d(v_i, t)$, for $t \neq v_i$, and since for any $x \neq u$, $d(u, x) = 1 \iff x \neq v_i$, we have $d(u, u_i) < d(u, v_j)$, where $i, j \in [1, k + 1]$. Hence, the claim is true.

    Next, we determine the solid-metric dimension of $D_k - u$. Observe that in $D_k - u$, since $\deg^-(v_i) = 1$ and $\deg^-(w) = 0$, where $i \in [1, k + 1]$, by Corollary~\ref{cor:forceddegleq1}, $w$ and $v_i, i \in [1, k + 1]$ are forced vertices. Moreover, since $N^-_{D_k - u}(u_i) \subseteq N^-_{D_k - u}[v_i], i \in [1, k + 1]$, by Theorem~\ref{thm:forced-characterization}, $u_i, i \in [1, k + 1]$ are also forced vertices in $D_k - u$. Hence, by definition, $\beta_S(D_k - u) = 2k + 3$. Thus, $\beta_S(D_k - u) - \beta_S(D_k) = k$.
\end{proof}

\begin{figure}
\centering
\begin{subfigure}{0.25\textwidth}
    \begin{tikzpicture}[>=Stealth, node distance=2cm,
    unfilled/.style={circle, draw, minimum size=2mm, inner sep=1pt},
    filled/.style={circle, draw, fill=black, minimum size=2mm, inner sep=1pt},
    labelstyle/.style={font=\small}, scale=0.65
]

\node[filled] (w) at (0,2)  {};
\node[filled] (u) at (0,5)  {};
\node[filled] (vk) at (4,1)  {};
\node[filled] (uk) at (4,2.6)  {};
\node[filled] (v1) at (4,4.4)  {};
\node[filled] (u1) at (4,6)  {};
\node[] (1) at (4,3.5)  {$\cdot$};
\node[] (2) at (4,3.2)  {$\cdot$};
\node[] (3) at (4,3.8)  {$\cdot$};

\node[labelstyle, left] at (w) {$w$};
\node[labelstyle, left] at (u) {$u$};
\node[labelstyle, right] at (vk) {$v_{k+1}$};
\node[labelstyle, right] at (uk) {$u_{k+1}$};
\node[labelstyle, right]at (v1) {$v_1$};
\node[labelstyle, right]at (u1) {$u_1$};

\draw[->, thick] (u) -- (w);
\draw[->, thick] (u) -- (u1);
\draw[->, thick] (u) -- (uk);
\draw[->, thick] (w) -- (u1);
\draw[->, thick] (w) -- (uk);
\draw[->, thick] (w) -- (v1);
\draw[->, thick] (w) -- (vk);
\draw[->, thick] (v1) -- (u1);
\draw[->, thick] (vk) -- (uk);

\end{tikzpicture}
    \caption{}
	\label{fig:vertexdeletionincrease}
\end{subfigure}
\hfill
\begin{subfigure}{0.25\textwidth}
    \begin{tikzpicture}[>=Stealth, node distance=2cm,
    unfilled/.style={circle, draw, minimum size=2mm, inner sep=1pt},
    filled/.style={circle, draw, fill=black, minimum size=2mm, inner sep=1pt},
    labelstyle/.style={font=\small}, scale=0.65
]

\node[filled] (u) at (0,3.5)  {};
\node[filled] (pn) at (4,1)  {};
\node[filled] (p1) at (4,6)  {};
\node[] (1) at (4,3.5)  {};

\node[labelstyle, left] at (u) {$u$};
\node[labelstyle, right] at (pn) {$p_n$};
\node[labelstyle, right] at (1) {$\overrightarrow{P_n}$};
\node[labelstyle, right]at (p1) {$p_1$};

\draw[thick, dash dot] (p1) -- (pn);
\draw[->, thick] (p1) -- (u);
\draw[->, thick] (pn) -- (u);
\draw[dashed, ->] (1) -- (u);

\end{tikzpicture}
    \caption{}
	\label{fig:vertexdeletiondecrease}
\end{subfigure}
\hfill
\begin{subfigure}{0.25\textwidth}
    \begin{tikzpicture}[>=Stealth, node distance=2cm,
    unfilled/.style={circle, draw, minimum size=2mm, inner sep=1pt},
    filled/.style={circle, draw, fill=black, minimum size=2mm, inner sep=1pt},
    labelstyle/.style={font=\small}, scale=0.65
]

\node[filled] (v) at (0,2)  {};
\node[filled] (u) at (0,5)  {};
\node[filled] (pn) at (4,1)  {};
\node[filled] (p1) at (4,6)  {};
\node[] (1) at (4,3.5)  {};

\node[labelstyle, left] at (v) {$v$};
\node[labelstyle, left] at (u) {$u$};
\node[labelstyle, right] at (pn) {$p_n$};
\node[labelstyle, right] at (1) {$\overrightarrow{P_n}$};
\node[labelstyle, right]at (p1) {$p_1$};

\draw[thick, dash dot] (p1) -- (pn);
\draw[->, thick] (u) -- (p1);
\draw[->, thick] (u) -- (pn);
\draw[dashed, ->] (u) -- (1);
\draw[->, thick] (u) -- (v);
\draw[->, thick] (p1) -- (v);
\draw[->, thick] (pn) -- (v);
\draw[dashed, ->] (1) -- (v);

\end{tikzpicture}
    \caption{}
	\label{fig:arcdeletiondecrease}
\end{subfigure}
\caption{Examples of graphs $D_k$ where removing a vertex or an arc causes arbitrary changes in the solid-metric dimension: (a) $\beta_S(D_k) = \beta_S(D_k - u) - k$, (b) $\beta_S(D_k - u) = \beta_S(D_k) - k$, and (c) $\beta_S(D_k - \overrightarrow{uv}) = \beta_S(D_k) - k$.}
\end{figure}

\begin{proposition}\label{prop-vert-decrease}
    For every integer $k \geq 1$, there exists a digraph $D_k$ with $|V(D_k)| \geq \max\{5, k + 3\}$ and 
    a vertex $u \in V(D_k)$ such that $\beta_S(D_k) - \beta_S(D_k - u) = k$.
\end{proposition}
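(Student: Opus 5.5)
The plan is to use the construction of Figure~\ref{fig:vertexdeletiondecrease}. Let $n=\max\{4,k+2\}$ and take a digraph $\overrightarrow{P_n}$ on $p_1,\dots,p_n$ whose underlying graph is the path $P_n$, oriented so that exactly $k$ vertices have in-degree $2$. This is possible because $k \le n-2$, and bidirected edges are allowed as in Corollary~\ref{cor:pathsmd}. Let $D_k$ be obtained by adding a new vertex $u$ together with the arcs $\overrightarrow{p_iu}$ for all $i\in[1,n]$. Then $|V(D_k)|=n+1\ge\max\{5,k+3\}$. Since $D_k-u=\overrightarrow{P_n}$, Theorem~\ref{thm:treessmd} gives $\beta_S(D_k-u)=n-n_2^-(\overrightarrow{P_n})=n-k$. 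It therefore suffices to show that $\beta_S(D_k)=n$.

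For the lower bound $\beta_S(D_k)\ge n$, I would show that every $p_i$ is forced in $D_k$. Adding $u$ creates no arcs into any $p_i$, so $N^-_{D_k}(p_i)=N^-_{\overrightarrow{P_n}}(p_i)\subseteq\{p_1,\dots,p_n\}\subseteq N^-_{D_k}[u]$. Since $u\neq p_i$, Theorem~\ref{thm:forced-characterization} says that $p_i$ is forced. This is the key effect of the construction: the sink $u$ absorbs all path vertices, so the in-degree-$2$ vertices of the path, which are not forced in $\overrightarrow{P_n}$, become forced.

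For the upper bound, I would show that $S=\{p_1,\dots,p_n\}$ is a solid-resolving set using Theorem~\ref{thm:srsmain}. Each $s\in S$ trivially satisfies $d(s,s)<d(s,t)$ for every $t\ne s$. It remains to separate $u$ from each $p_i$, that is, to find $s\in S$ with $d(s,u)<d(s,p_i)$. Every $p_j$ has $d(p_j,u)=1$. Choose $j\neq i$ such that $p_j\notin N^-(p_i)$; this is possible because $p_i$ has at most two in-neighbours and $n\ge4$. Then $d(p_j,p_i)\ge2>1=d(p_j,u)$, as required.

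Hence $\beta_S(D_k)=n$ and $\beta_S(D_k)-\beta_S(D_k-u)=k$. As a consistency check, $u$ itself is not forced. Indeed, $N^-(u)$ consists of all $n\ge4$ path vertices, while every closed in-neighbourhood $N^-[p_j]$ has at most three elements, so no $v\neq u$ satisfies $N^-(u)\subseteq N^-[v]$. The only delicate points are choosing $n\ge4$, so that the separating $p_j$ exists and $u$ is not forced, and choosing $n\ge k+2$, so that the path can carry $k$ in-degree-$2$ vertices; this is where the bound $\max\{5,k+3\}$ comes from.
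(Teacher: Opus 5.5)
Your proof is correct and follows the paper's argument essentially verbatim. The sink $u$ makes every path vertex forced via Theorem~\ref{thm:forced-characterization}, the path vertices separate $u$ from each $p_i$, and Theorem~\ref{thm:treessmd} gives $\beta_S(D_k-u)=n-k$. The differences are cosmetic: the paper separates $u$ from $p_i$ specifically with $p_1$ or $p_n$, not an arbitrary non-in-neighbour. Your check that $u$ is not forced is redundant, since exhibiting the solid-resolving set $\{p_1,\dots,p_n\}$ already implies it.
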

\begin{proof}
    Let $k \geq 1, n \geq \max\{4, k+2\}$ and let $\overrightarrow{P_n}$ be a digraph whose underlying graph is the path on $n$ vertices $P_n$. Let $n^-_2(\overrightarrow{P_n}) = k$ and let $p_1$ and $p_n$ denote the initial and terminal vertices of $\overrightarrow{P_n}$. Construct the graph $D_k$ by adding a new vertex $u$ to $\overrightarrow{P_n}$ and arcs of the form $\overrightarrow{vu}$ for all $v \in V(\overrightarrow{P_n})$. The graph is illustrated in Figure~\ref{fig:vertexdeletiondecrease}. For every vertex $v\neq u$, we have $N^-(v) \subseteq N^-[u]$. Hence, by Theorem~\ref{thm:forced-characterization}, all the vertices in $S = V(D_k) \setminus \{u\}$ are forced vertices. Since $n \geq \max\{4, k+2\}$, for any vertex $x \in V(D_k) \setminus \{u\}$, the structure of $D_k$ ensures that either $d(p_1, u) < d(p_1, x)$ or $d(p_n, u) < d(p_n, x)$. Hence, $S$ is a solid-resolving set, and $\beta_S(D_k) = |S| = n$. On the other hand, since $D_k - u \cong \overrightarrow{P_n}$, by Theorem~\ref{thm:treessmd}, we have $\beta_S(D_k - u) = \beta_S(\overrightarrow{P_n}) = n - k$. Consequently, we have $\beta_S(D_k) - \beta_S(D_k - u) = k$.
\end{proof}

In Propositions~\ref{prop-vert-increase} and~\ref{prop-vert-decrease}, the increase or decrease of the solid-metric dimension depends on the change in the number of forced vertices. There exist digraphs where this is not the case, for example, see the digraphs in Figure~\ref{fig-vert-examples}. The digraph in Figure~\ref{fig-vert-increase} has solid-metric dimension 4, and four forced vertices. When vertex 7 and all incident arcs are deleted, the solid-metric dimension increases by~1. However, the forced vertices of the resulting digraph are the same four forced vertices as of the original digraph. Similarly, the solid-metric dimension of the digraph in Figure~\ref{fig-vert-decrease} is 4. It has three forced vertices which, together with either vertex 2 or 4, form a solid-metric basis. When vertex 6 and all incident arcs are removed, the solid-metric dimension decreases by 1, while the forced vertices are the same as those of the original digraph.

\begin{figure}[h]
	\centering
		\begin{subfigure}[b]{0.45\linewidth}
			\centering
			\begin{tikzpicture}[>=Stealth, node distance=2cm,
				unfilled/.style={circle, draw, minimum size=2mm, inner sep=1pt},
				filled/.style={circle, draw, fill=black, minimum size=2mm, inner sep=1pt},
				labelstyle/.style={font=\small}
				]
				
				\node (1) at (0,1.5)   {};
				\node (2) at (2,3)  {};
				\node (3) at (4,3)  {};
				\node (4) at (6,1.5)  {};
				\node (5) at (4,0)  {};
				\node (6) at (2,0)   {};
				\node (7) at (1.7,1.7)   {};
				
				\foreach \n in {1,2,3,4} {
					\node[filled] at (\n) {};
				}
				\node[unfilled] at (5) {};
				\node[unfilled] at (6) {};
				\node[unfilled] at (7) {};
				
				\node[labelstyle, left]      at (1) {1};
				\node[labelstyle, above] 	  at (2) {2};
				\node[labelstyle, above]	 at (3) {3};
				\node[labelstyle, right]       at (4) {4};
				\node[labelstyle, below]	  at (5) {5};
				\node[labelstyle, below] 	  at (6) {6};
				\node[labelstyle, above] 	  at (7) {7};
				
				\draw[->, thick] (1) -- (2);
				\draw[->, thick] (2) -- (3);
				\draw[->, thick] (3) -- (4);
				\draw[->, thick] (4) -- (5);
				\draw[->, thick] (5) -- (6);
				\draw[->, thick] (6) -- (1);
				\draw[->, thick] (2) -- (5);
				\draw[->, thick] (3) -- (6);
				\draw[->, dashed, thick] (1) -- (7);
				\draw[->, dashed, thick] (3) -- (7);
				\draw[->, dashed, thick] (7) -- (6);				
			\end{tikzpicture}
			\caption{}\label{fig-vert-increase}
		\end{subfigure}
		\hfill
		\begin{subfigure}[b]{0.45\linewidth}
			\centering
			\begin{tikzpicture}[>=Stealth, node distance=2cm,
				unfilled/.style={circle, draw, minimum size=2mm, inner sep=1pt},
				filled/.style={circle, draw, fill=black, minimum size=2mm, inner sep=1pt},
				labelstyle/.style={font=\small}
				]
				
				\node (1) at (0,0)  {};
				\node (2) at (2,0)  {};
				\node (3) at (-1,2)  {};
				\node (4) at (1,1.5)  {};
				\node (5) at (3,2)  {};
				\node (6) at (1,3)  {};
				
				\foreach \n in {1,3,5} {
					\node[filled] at (\n) {};
				}
				\foreach \n in {2,4,6} {
					\node[unfilled] at (\n) {};
				}
				
				\node[labelstyle, below]      at (1) {1};
				\node[labelstyle, below] 	  at (2) {2};
				\node[labelstyle, left]	 at (3) {3};
				\node[labelstyle, above]       at (4) {4};
				\node[labelstyle, right]	  at (5) {5};
				\node[labelstyle, above] 	  at (6) {6};
				
				\draw[->, thick] (1) -- (2);
				\draw[->, thick] (1) -- (3);
				\draw[->, thick] (4) -- (1);
				\draw[->, thick] (2) -- (4);
				\draw[->, thick] (5) -- (2);
				\draw[->, thick] (3) -- (4);
				\draw[->, dashed, thick] (3) -- (6);
				\draw[->, thick] (4) -- (5);
				\draw[->, dashed, thick] (5) -- (6);	
			\end{tikzpicture}
			\caption{}\label{fig-vert-decrease}
		\end{subfigure}
	\caption{Two digraphs (including the dashed arcs) with the forced vertices illustrated in black. When the vertex 7 of (a) or vertex 6 of (b), and their incident arcs are removed, the solid-metric dimension of (a) increases, and that of (b) decreases by~1, but the forced vertices remain the same as the original graph.}\label{fig-vert-examples}
\end{figure}

\subsection{Arc Deletion}

As in the previous subsection, in the following result, we construct a digraph whose solid-metric dimension decreases by an arbitrary integer~$k$ when one arc is removed.

\begin{proposition}\label{prop-arcdecrease}
    For every integer $k \geq 1$, there exists a digraph $D_k$ with $|V(D_k)| \geq \max\{6, k + 4\}$ and an arc $\overrightarrow{uv} \in A(D_k)$ such that $\beta_S(D_k) - \beta_S(D_k - \overrightarrow{uv}) = k$.
\end{proposition}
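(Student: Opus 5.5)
We use the construction of Figure~\ref{fig:arcdecrease}. Let $n \geq \max\{4, k+2\}$ and let $\overrightarrow{P_n}$ be an orientation of the path $p_1 p_2 \cdots p_n$ with $n^-_2(\overrightarrow{P_n}) = k$; this is possible by the remark after Corollary~\ref{cor:pathsmd}, since $k \leq n-2$. Add two new vertices $u, v$ and the arcs $\overrightarrow{up_i}$, $\overrightarrow{p_iv}$ for all $i$, together with $\overrightarrow{uv}$. Then $|V(D_k)| = n+2 \geq \max\{6, k+4\}$. The plan is to show $\beta_S(D_k) = n+1$ and $\beta_S(D_k - \overrightarrow{uv}) = n-k+1$.

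\emph{Step 1: $\beta_S(D_k) = n+1$.} The vertex $u$ is a source, so it is forced by Corollary~\ref{cor:forceddegleq1}. Every $p_i$ satisfies $N^-(p_i) \subseteq N^-[v]$, because all of $u, p_1, \dots, p_n$ are in-neighbours of $v$. Hence every $p_i$ is forced by Theorem~\ref{thm:forced-characterization}, which gives $\beta_S(D_k) \geq n+1$. For the upper bound we show that $V(D_k)\setminus\{v\}$ is a solid-resolving set. By Theorem~\ref{thm:srsmain}, it suffices to find, for each $x \neq v$, a vertex $s$ with $d(s,v) = 1 < d(s,x)$.
\begin{itemize}
\item If $x = u$, any $p_i$ works, since $u$ is unreachable from the path.
\item If $x = p_j$, take an endpoint $p_1$ or $p_n$ that is neither $p_j$ nor adjacent to it. Such an endpoint exists because $n \geq 4$.
\end{itemize}

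\emph{Step 2: forced vertices of $D' = D_k - \overrightarrow{uv}$.} Again $u$ is a source, hence forced. Suppose $p_i$ has at most one in-neighbour $r$ on the path. Then $N^-(p_i) \subseteq \{u, r\} \subseteq N^-[r]$ (if there is no such $r$, use $N^-[w]$ for any path vertex $w$). So $p_i$ is forced.

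Now suppose $p_i$ has two path in-neighbours $r_1, r_2$. Then $p_i$ is not forced, by the following checks.
\begin{itemize}
\item $N^-(v)$ no longer contains $u$, so $v$ does not witness.
\item $r_1, r_2$ are non-adjacent on the path, so neither $r_j$ witnesses.
\item No other path vertex has both $r_1, r_2$ as in-neighbours.
\item $u$ has no in-neighbours, so it cannot witness.
\end{itemize}
So the forced vertices number $n-k+1$.

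\emph{Step 3: the forced set $S$ resolves $D'$; this is the main check.} The key observation is that distances among path vertices in $D'$ equal those in $\overrightarrow{P_n}$. The reason is that $u$ is not reachable from the path and $v$ is a sink, so no new shortest paths arise.
\begin{itemize}
\item For a non-forced $p_i$ and another path vertex $x$, Theorem~\ref{thm:treessmd} applied to $\overrightarrow{P_n}$ gives a forced path vertex $s$ with $d(s,p_i) < d(s,x)$. Such an $s$ lies in $S$.
\item For $p_i$ versus $v$, the vertex $u$ gives $d(u,p_i) = 1 < 2 = d(u,v)$.
\item For $p_i$ versus $u$, any vertex of $S$ on the path from which $p_i$ is reachable works, since $u$ is unreachable from the path. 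Alternatively, apply Theorem~\ref{thm:treessmd} backtracking.
\item For $v$ versus any $x$, use an endpoint $p_1$ or $p_n$ as in Step~1. Both endpoints have path in-degree at most $1$, so they lie in $S$.
\end{itemize}
Thus $\beta_S(D') = n-k+1$, and the difference is $k$. The delicate points are the non-forcedness of in-degree-2 path vertices in Step~2 and the distance-preservation claim in Step~3.
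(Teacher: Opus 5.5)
Your proof is correct and follows essentially the same construction and argument as the paper. In $D_k$, every vertex except $v$ is forced, and $V(D_k)\setminus\{v\}$ resolves via the endpoints $p_1,p_n$. In $D_k-\overrightarrow{uv}$, the forced set (path vertices of path in-degree at most $1$, together with $u$) resolves using three facts: distances along the path are preserved, $d(u,p_i)=1<2=d(u,v)$, and the endpoints again separate $v$.

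Two small remarks. First, your Step~2 check that path vertices of path in-degree $2$ are not forced is unnecessary: the lower bound only needs the forced vertices you exhibit, and the upper bound comes from $S$ resolving. Second, when $n<2k+1$ the condition $n^-_2=k$ forces bidirected arcs, so $\overrightarrow{P_n}$ should be a digraph with underlying graph $P_n$, as in the paper, rather than a strict orientation; alternatively, simply take $n\geq \max\{4,2k+1\}$.
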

\begin{proof}
     Let $k \geq 1$, $n \geq \max\{4, k+2\}$, and let $\overrightarrow{P_n}$ be a digraph whose underlying graph is the path on $n$ vertices $P_n$. Let $n^-_2(\overrightarrow{P_n}) = k$ and let $p_1$ and $p_n$ denote the initial and terminal vertices of the path $\overrightarrow{P_n}$, respectively. Construct the graph $D_k$ by adding new vertices $u$ and $v$ to $\overrightarrow{P_n}$ and arcs of the form $\overrightarrow{uw}$ for all $w \in V(\overrightarrow{P_n})$ and arcs of the form $\overrightarrow{wv}$ for all $w \in V(\overrightarrow{P_n}) \cup \{u\}$. The graph is illustrated in Figure~\ref{fig:arcdeletiondecrease}. For every vertex $w\neq v$, we have $N^-(w) \subseteq N^-[v]$. Hence, by Theorem~\ref{thm:forced-characterization}, all the vertices in $S = V(D_k) \setminus \{v\}$ are forced vertices. For any vertex $x \in V(D_k) \setminus \{v\}$, the structure of $D_k$ ensures that either $d(p_1, v) < d(p_1, x)$ or $d(p_n, v) < d(p_n, x)$. Hence, $S$ is a solid-resolving set, and $\beta_S(D_k) = |S| = n + 1$. 
     
     Next, consider the graph $D_k - \overrightarrow{uv}$. Observe that $\deg^-_{D_k - \overrightarrow{uv}}(w) = \deg^-_{\overrightarrow{P_n}}(w) + 1$, for $w \in V(\overrightarrow{P_n})$. Hence, every vertex with in-degree $0$ in $\overrightarrow{P_n}$ has in-degree $1$ in $D_k - \overrightarrow{uv}$ and by Corollary~\ref{cor:forceddegleq1}, remains forced in $D_k - \overrightarrow{uv}$. On the other hand, if a vertex $w$ has in-degree $1$ in $\overrightarrow{P_n}$, with $\overrightarrow{w'w} \in A(\overrightarrow{P_n})$ for some vertex $w' \in V(\overrightarrow{P_n})$, then $w$ will have in-degree $2$ in $D_k - \overrightarrow{uv}$, but is still forced in $D_k - \overrightarrow{uv}$ by Theorem~\ref{thm:forced-characterization} as $N^-(w) \subseteq N^-[w']$. Since $\deg^-(u) = 0$ as well, if $S = \{w \in V(\overrightarrow{P_n}) \mid \deg^-_{\overrightarrow{P_n}}(w) \leq 1\} \cup \{u\}$, then we have $\beta_S(D_k - \overrightarrow{uv}) \geq |S|$. 
     
     We will then prove that $S$ is a solid-metric basis of $D_k - \overrightarrow{uv}$.  Notice that none of the directed paths between any two vertices in $\overrightarrow{P_n}$ visit $u$ or $v$. Thus, the distance from $p_i$ to $p_j$, where $p_i,p_j \in V(\overrightarrow{P_n})$, is the same in both $D_k - \overrightarrow{uv}$ and $\overrightarrow{P_n}$. Since $S \cap V(\overrightarrow{P_n})$ is precisely the set of forced vertices of $\overrightarrow{P_n}$, vertex pairs within $V(\overrightarrow{P_n})$ are distinguished according to Theorem~\ref{thm:treessmd}. Since $p_1,p_n \in S$, $n \geq 4$, and $d(p_1,v)=d(p_n,v)=1$, we have $d(p_1,v)<d(p_1,p_i)$ or $d(p_n,v)<d(p_n,p_i)$ for each $p_i \in V(\overrightarrow{P_n})$. On the other hand, we have $d(u,p_i)=1<d(u,v)=2$. Thus, $S$ is a solid-metric basis of $D_k - \overrightarrow{uv}$.
     Hence, we have $\beta_S(D_k - \overrightarrow{uv}) = \beta_S(\overrightarrow{P_n}) + 1 = n + 1 - k$. Consequently, we have $\beta_S(D_k) - \beta_S(D_k - \overrightarrow{uv}) = k$.
\end{proof}

Unlike in the case of vertex deletion, the following result shows that deleting an arc can increase the solid-metric dimension of a digraph by at most $1$.
\begin{proposition}\label{prop-arcincrease}
		Let $D$ be a digraph and $\overrightarrow{uv} \in A(D)$. If $S$ is a solid-resolving set of $D$, then $S \cup \{v\}$ is a solid-resolving set of $D - \overrightarrow{uv}$. Consequently, $\beta_S(D - \overrightarrow{uv}) -  \beta_S (D) \leq 1$.
	\end{proposition}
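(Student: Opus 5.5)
We will use the characterization in Theorem~\ref{thm:srsmain}. Write $D' = D - \overrightarrow{uv}$ and $S' = S \cup \{v\}$. Let $x, y$ be distinct vertices; we must find $s \in S'$ with $d_{D'}(s,x) < d_{D'}(s,y)$. If $x = v$, then $s = v$ itself works, since $d_{D'}(v,v) = 0 < d_{D'}(v,y)$. So we may assume $x \neq v$.

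Since $S$ solid-resolves $D$, there is $s \in S$ with $d_D(s,x) < d_D(s,y)$. The approach is to pass this inequality from $D$ to $D'$ using two facts about arc removal.
\begin{itemize}
\item Distances never decrease: $d_{D'}(a,b) \ge d_D(a,b)$ for all $a,b$. Hence $d_{D'}(s,y) \ge d_D(s,y) > d_D(s,x)$.
\item It therefore suffices to have $d_{D'}(s,x) = d_D(s,x)$, i.e.\ some shortest $s$--$x$ path in $D$ avoids the arc $\overrightarrow{uv}$.
\end{itemize}
If some shortest $s$--$x$ path avoids $\overrightarrow{uv}$, we are done with this $s$. In particular this covers the case $d_D(s,x)=\infty$ being impossible (since $d_D(s,x)<d_D(s,y)$ forces finiteness).

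The remaining case, which is the main obstacle, is when every shortest $s$--$x$ path in $D$ uses $\overrightarrow{uv}$. Then $v$ lies on a shortest $s$--$x$ path and $d_D(s,x) = d_D(s,v) + d_D(v,x)$. Here we switch to the witness $v \in S'$. The subpath from $v$ to $x$ of such a shortest path is a shortest $v$--$x$ path. It does not use $\overrightarrow{uv}$, because a shortest path starting at $v$ cannot return to $v$. Hence
\[
d_{D'}(v,x) = d_D(v,x) = d_D(s,x) - d_D(s,v).
\]
On the other side, by the triangle inequality in $D$,
\[
d_D(s,y) \le d_D(s,v) + d_D(v,y).
\]
Combining this with $d_D(s,x) < d_D(s,y)$ gives
\[
d_D(v,x) = d_D(s,x) - d_D(s,v) < d_D(s,y) - d_D(s,v) \le d_D(v,y) \le d_{D'}(v,y).
\]
The infinite case needs no extra care: $d_D(v,x)$ is finite and the inequality only requires $d_D(v,y)$ to exceed it.

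Thus $v$ distinguishes $x$ from $y$ in $D'$, and $S'$ is a solid-resolving set of $D'$ by Theorem~\ref{thm:srsmain}. Taking $S$ to be a solid-metric basis of $D$ gives $\beta_S(D') \le \beta_S(D) + 1$.
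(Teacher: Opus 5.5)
Your proof is correct and is essentially the paper's argument. You take $s\in S$ with $d_D(s,x)<d_D(s,y)$ and note that $s$ still works unless every shortest $s$--$x$ path uses $\overrightarrow{uv}$; in that case $d_D(s,x)=d_D(s,v)+d_D(v,x)$, and distances from $v$ to $x$ are unaffected by the deletion, so $v$ separates $x$ from $y$. The only difference is presentation: the paper argues by contradiction, whereas you use a direct case split and need only the one-sided inequality $d_D(v,y)\le d_{D-\overrightarrow{uv}}(v,y)$ in place of the paper's equality.
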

	\begin{proof}
		Suppose to the contrary that $S \cup \{v\}$ is not a solid-resolving set of $H=D - \overrightarrow{uv}$. Then there exist distinct vertices $x$ and $y$ such that $d_H(s,x) \geq d_H(s,y)$ for all $s \in S \cup \{v\}$. In particular, $d_H(v,x) \geq d_H(v,y)$. However, $S$ is a solid-resolving set of $D$, and thus there exists some $t \in S$ such that $d_D(t,x) < d_D(t,y)$. The removal of $\overrightarrow{uv}$ cannot make distances in $H$ shorter compared to the corresponding distances in $D$. Thus,  
		\[d_H(t,x) \geq d_H(t,y) \geq d_D (t,y) > d_D (t,x)\]
		and we have $d_H (t,x) > d_D (t,x)$. This implies that all shortest paths from $t$ to $x$ in $D$ include the arc $\overrightarrow{uv}$. Clearly, no shortest path from $v$ to $x$ or $y$ in $D$ includes the arc $\overrightarrow{uv}$ and consequently $d_H(v,x) = d_D(v,x)$ and $d_H(v,y) = d_D(v,y)$. Now,
		 \begin{align*}
		 	d_D (t,y) & \leq d_D (t,v) + d_D(v,y) \leq d_D (t,v) + d_H(v,x) \\
		 	& = d_D (t,v) + d_D(v,x) = d_D (t,x),
		 \end{align*}
		 a contradiction.
		 
		 Therefore, $S \cup \{v\}$ is a solid-resolving set of $H = D - \overrightarrow{uv}$. This implies that $\beta_S (D - \overrightarrow{uv}) \leq \beta_S (D) + 1$ (when $S$ is chosen to be a solid-metric basis of $D$).
	\end{proof}

    There exist digraphs whose solid-metric dimension increases by~1 when an arc is removed; for example, when the underlying graph is a cycle or a path, such digraphs are easy to construct. For these examples, the increase is due to the generation of another forced vertex. However, there exist digraphs where removing an arc increases the solid-metric dimension without generating new forced vertices. One such example is illustrated in Figure~\ref{fig-arc-increase}. The set $\{1,2,3,4\}$ is the only solid-metric basis of this digraph. When the arc $\overrightarrow{26}$ is removed, the forced vertices remain the same, but the solid-metric dimension increases by~1.

    The construction in Proposition~\ref{prop-arcdecrease} is also dependent on the change in the number of forced vertices. However, as with arc deletions, there exist digraphs where the decrease in the solid-metric dimension does not depend on forced vertices, for instance the digraph illustrated in Figure~\ref{fig-arc-decrease}. The solid-metric dimension of this digraph is 4. The three forced vertices together with either vertex 2 or 6 are a solid-metric basis of the digraph. When the arc $\overrightarrow{13}$ is removed, the solid-metric dimension decreases by~1, but the forced vertices are the same as that of the original digraph.

    \begin{figure}[h]
	\centering
	\begin{subfigure}[b]{0.45\linewidth}
		\centering
		\begin{tikzpicture}[>=Stealth, node distance=2cm,
			unfilled/.style={circle, draw, minimum size=2mm, inner sep=1pt},
			filled/.style={circle, draw, fill=black, minimum size=2mm, inner sep=1pt},
			labelstyle/.style={font=\small}
			]
				
				\node (1) at (0,1)   {};
				\node (2) at (2,2)  {};
				\node (3) at (4,2)  {};
				\node (4) at (6,1)  {};
				\node (5) at (4,0)  {};
				\node (6) at (2,0)   {};
				
				\foreach \n in {1,2,3,4} {
					\node[filled] at (\n) {};
				}
				\node[unfilled] at (5) {};
				\node[unfilled] at (6) {};
				
				\node[labelstyle, left]      at (1) {1};
				\node[labelstyle, above] 	  at (2) {2};
				\node[labelstyle, above]	 at (3) {3};
				\node[labelstyle, right]       at (4) {4};
				\node[labelstyle, below]	  at (5) {5};
				\node[labelstyle, below] 	  at (6) {6};
				
				\draw[->, thick] (1) -- (2);
				\draw[->, thick] (2) -- (3);
				\draw[->, thick] (3) -- (4);
				\draw[->, thick] (4) -- (5);
				\draw[->, thick] (5) -- (6);
				\draw[->, thick] (6) -- (1);
				\draw[->, thick] (2) -- (5);
				\draw[->, thick] (3) -- (6);
				\draw[->, dashed, thick] (2) -- (6);				
		\end{tikzpicture}
		\caption{}\label{fig-arc-increase}
	\end{subfigure}
	\hfill
	\begin{subfigure}[b]{0.45\linewidth}
		\centering
		\begin{tikzpicture}[>=Stealth, node distance=2cm,
			unfilled/.style={circle, draw, minimum size=2mm, inner sep=1pt},
			filled/.style={circle, draw, fill=black, minimum size=2mm, inner sep=1pt},
			labelstyle/.style={font=\small}
			]
			
			\node (1) at (0,1)  {};
			\node (2) at (2,2)  {};
			\node (3) at (4,2)  {};
			\node (4) at (6,1)  {};
			\node (5) at (4,0)  {};
			\node (6) at (2,0)  {};
			
			\foreach \n in {1,4,5} {
				\node[filled] at (\n) {};
			}
			\foreach \n in {2,3,6} {
				\node[unfilled] at (\n) {};
			}
			
			\node[labelstyle, left]      at (1) {1};
			\node[labelstyle, above] 	  at (2) {2};
			\node[labelstyle, above]	 at (3) {3};
			\node[labelstyle, right]       at (4) {4};
			\node[labelstyle, below]	  at (5) {5};
			\node[labelstyle, below] 	  at (6) {6};
			
			\draw[->, thick] (1) -- (2);
			\draw[->, thick] (2) -- (3);
			\draw[->, thick] (4) -- (3);
			\draw[->, thick] (5) -- (4);
			\draw[->, thick] (5) -- (6);
			\draw[->, thick] (6) -- (1);
			\draw[->, thick] (2) -- (6);
			\draw[->, thick] (5) -- (2);
			\draw[->, thick] (3) -- (5);
			\draw[->, thick] (3) -- (6);
			\draw[->, dashed, thick] (1) to[out=70,in=140,looseness=1] (3);				
		\end{tikzpicture}
		\caption{}\label{fig-arc-decrease}
	\end{subfigure}
		\caption{Two digraphs (including the dashed arc(s)) with the forced vertices illustrated in black. When the corresponding dashed arc is removed, the solid-metric dimension of (a) increases, of (b) decreases by~1, but the forced vertices of both digraphs remain the same as the original digraphs'.}
	\end{figure}
    
	\begin{remark}
		Recall that when we remove one edge of an undirected graph, the increase in metric dimension and edge metric dimension can be 2. The difference between this behavior and the conclusion of Proposition~\ref{prop-arcincrease} seems to be due to directionality rather than the variant of metric dimension we are considering. Let us consider briefly solid-resolving sets in undirected graphs. We can attempt a similar technique as in the proof of Proposition~\ref{prop-arcincrease}. When $v$ is the end-vertex of the removed edge $uv$ that is closest to $x$ in all shortest paths from $t$ to $x$, we can indeed prove that $d_{G-uv}(v,x) < d_{G-uv}(v,y)$, and the pair $x,y$ would be distinguished by $S \cup \{v\}$. However, $x,y$ is not necessarily the only pair we need to consider. For those other pairs, $u$ might be the vertex we need to add to $S$. Thus, as is the case with the metric dimension of undirected graphs, we can prove that $S \cup \{u,v\}$ is a solid-resolving set of the undirected graph $G-uv$.
	\end{remark}

    To the best of our knowledge, the effects of vertex and arc deletions on the metric dimension of a digraph have not been studied. Thus, extending these results to the metric dimension of digraphs would be of interest.

\section{Relation with the Underlying Graph}
\label{sec-smdundirected}
Let $D$ be a digraph with the property that $\overrightarrow{uv} \in A(D)$ if and only if $\overrightarrow{vu} \in A(D)$. Denote its underlying undirected graph by $G$. Observe that since $d_D(u, v) = d_D(v, u)$ for any distinct vertices $u, v \in V(D)$, we have that $d_D(s, u) < d_D(s, v) \iff d_G(s, u) < d_G(s, v)$. That is, $S$ is a solid-metric basis in $D$ if and only if $S$ is a solid-metric basis in $G$. This tells us that the entire study of solid-metric basis and solid-metric dimension in undirected graphs can be considered to be completely contained in the study of solid-metric basis and solid-metric dimension in digraphs. This observation is useful for extending results for digraphs to undirected graphs. On the other hand, we show that there is no trivial relation between the solid-metric dimension of a digraph and the solid-metric dimension of its underlying undirected graph. To be precise, we show that there exist digraphs for which the absolute difference between their solid-metric dimension and that of their underlying undirected graphs can be arbitrarily large.

\begin{proposition}
    For every integer $k \geq 1$ and $n \geq k + 2$, there exists a digraph $D_k$ with $|V(D_k)| = n$ such that $\beta_S(D_k) - \beta_S(G_k) = k$, where $G_k$ is the underlying undirected graph of $D_k$.
\end{proposition}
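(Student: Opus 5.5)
The plan is to take $D_k$ to be a digraph whose underlying graph is the path $P_n$. Theorem~\ref{thm:treessmd} (via Corollary~\ref{cor:pathsmd}) gives $\beta_S(D_k)=n-n^-_2(D_k)$, so we can tune $\beta_S(D_k)$ freely. It then suffices to show that the undirected path has $\beta_S(P_n)=2$ and to pick an orientation with $n^-_2(D_k)=n-k-2$.

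\emph{Construction.} Write $P_n=p_1p_2\cdots p_n$ and set $m=n-k$, so $m\geq 2$.
\begin{itemize}
\item Each edge $p_ip_{i+1}$ with $1\le i\le m-1$ is bidirected, i.e.\ both $\overrightarrow{p_ip_{i+1}}$ and $\overrightarrow{p_{i+1}p_i}$ are arcs.
\item Each edge $p_ip_{i+1}$ with $m\le i\le n-1$ is oriented forward, as $\overrightarrow{p_ip_{i+1}}$.
\end{itemize}
The resulting in-degrees are:
\begin{itemize}
\item $p_2,\dots,p_{m-1}$ have in-degree $2$;
\item $p_1$ has in-degree $1$, from $p_2$;
\item $p_m$ has in-degree $1$, from $p_{m-1}$, because the arc between $p_m$ and $p_{m+1}$ leaves $p_m$ (if $m=n$ there is no such arc);
\item each $p_j$ with $j>m$ has in-degree $1$.
\end{itemize}
Hence $n^-_2(D_k)=m-2=n-k-2$, and Theorem~\ref{thm:treessmd} gives $\beta_S(D_k)=n-(n-k-2)=k+2$. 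The degenerate case $m=2$ is just the one-way directed path with one bidirected end edge, and it is covered by the same count.

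\emph{Undirected side.} We must show $\beta_S(G_k)=2$ with $G_k=P_n$. This follows from the observation at the start of this section: $\beta_S(P_n)$ equals the solid-metric dimension of the fully bidirected path, which by Theorem~\ref{thm:treessmd} is $n-(n-2)=2$, since every interior vertex has in-degree $2$. As a direct check, the endpoint $p_1$ satisfies $d(p_1,p_i)<d(p_1,p_j)$ for all $i<j$, so $\{p_1,p_n\}$ (indeed $\{p_1\}$ together with the forced $p_n$) is solid-resolving. Moreover, both endpoints have degree $1$ and are therefore forced, so the value is exactly $2$. Consequently $\beta_S(D_k)-\beta_S(G_k)=(k+2)-2=k$.

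The only delicate point is the in-degree bookkeeping at the junction vertex $p_m$ and at the boundary cases $m=2$ and $m=n$. Here one must confirm that $p_m$ has in-degree exactly $1$ and is not accidentally counted in $n^-_2$, and that the hypothesis $n\ge k+2$ is precisely what makes $m\ge 2$ and the construction valid. Everything else is a direct application of Theorem~\ref{thm:treessmd}.
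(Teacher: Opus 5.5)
Your proof is correct and follows the paper's argument. You take a digraph on the path $P_n$ with $n^-_2 = n-k-2$, which necessarily uses bidirected arcs, apply Theorem~\ref{thm:treessmd} to get $\beta_S(D_k)=k+2$, and compare with $\beta_S(P_n)=2$; the only differences are that you write the orientation down explicitly and verify $\beta_S(P_n)=2$ yourself (via the symmetric-digraph observation or the endpoints $p_1,p_n$) rather than citing the undirected result from the literature.
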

\begin{proof}
    Let $k \geq 1$, $n \geq k + 2$ and let $D_k$ be a directed path $\overrightarrow{P_n}$ with the property that $n_2^-(\overrightarrow{P_n}) = n - k - 2$. Hence, $G_k = P_n$ is the underlying undirected path. By Theorem~\ref{thm:treessmd}, we have that $\beta_S(D_k) = n - (n - k - 2) = k + 2$. Moreover, from \cite{Solid}, we have that $\beta_S(G_k) = 2$. Hence, $\beta_S(D_k) - \beta_S(G_k) = k$, as required.
\end{proof}

\begin{figure}
\centering
\begin{tikzpicture}[>=Stealth, node distance=2cm,
    unfilled/.style={circle, draw, minimum size=2mm, inner sep=1pt},
    filled/.style={circle, draw, fill=black, minimum size=2mm, inner sep=1pt},
    labelstyle/.style={font=\small}
]

\node[filled] (u1) at (4.5, 2)  {};
\node[filled] (v1) at (4.5, 1)  {};
\node[filled] (w1) at (3, 0)  {};
\node[filled] (w2) at (6, 0)  {};
\node[] (1) at (6.75, 0)  {$\cdot$};
\node[] (2) at (7.5, 0)  {$\cdot$};
\node[] (3) at (8.25, 0)  {$\cdot$};
\node[filled] (ui) at (10.5, 2)  {};
\node[filled] (vi) at (10.5, 1)  {};
\node[filled] (wi) at (9, 0)  {};
\node[filled] (wi+1) at (12, 0)  {};
\node[] (4) at (12.75, 0)  {$\cdot$};
\node[] (5) at (13.5, 0)  {$\cdot$};
\node[] (6) at (14.25, 0)  {$\cdot$};
\node[filled] (uk) at (16.5, 2)  {};
\node[filled] (vk) at (16.5, 1)  {};
\node[filled] (wk) at (15, 0)  {};
\node[filled] (wk+1) at (18, 0)  {};

\node[labelstyle, below] at (w1) {$w_1$};
\node[labelstyle, below] at (w2) {$w_2$};
\node[labelstyle, below] at (v1) {$v_1$};
\node[labelstyle, above] at (u1) {$u_1$};
\node[labelstyle, below] at (wi) {$w_i$};
\node[labelstyle, below] at (wi+1) {$w_{i + 1}$};
\node[labelstyle, below] at (vi) {$v_i$};
\node[labelstyle, above] at (ui) {$u_i$};
\node[labelstyle, below] at (wk) {$w_k$};
\node[labelstyle, below] at (wk+1) {$w_{k + 1}$};
\node[labelstyle, below] at (vk) {$v_k$};
\node[labelstyle, above] at (uk) {$u_k$};

\draw[->, thick] (w1) -- (w2);
\draw[->, thick] (wk) -- (wk+1);
\draw[->, thick] (v1) -- (u1);
\draw[->, thick] (w1) -- (u1);
\draw[->, thick] (w2) -- (u1);
\draw[->, thick] (w2) -- (v1);
\draw[->, thick] (v1) -- (w1);
\draw[->, thick] (vk) -- (uk);
\draw[->, thick] (wk) -- (uk);
\draw[->, thick] (wk+1) -- (uk);
\draw[->, thick] (wk+1) -- (vk);
\draw[->, thick] (vk) -- (wk);
\draw[->, thick] (vi) -- (ui);
\draw[->, thick] (wi) -- (ui);
\draw[->, thick] (wi+1) -- (ui);
\draw[->, thick] (wi+1) -- (vi);
\draw[->, thick] (vi) -- (wi);
\draw[->, thick] (wi) -- (wi+1);
\end{tikzpicture}
    \caption{A graph $D_k$ with the property that $\beta_S(G_k) - \beta_S(D_k) = k$.}
	\label{fig:smdundirectedgreater}
\end{figure}

\begin{proposition}
    For every integer $k \geq 1$, there exists a digraph $D_k$ such that $\beta_S(G_k) - \beta_S(D_k) = k$, where $G_k$ is the underlying undirected graph of $D_k$.
\end{proposition}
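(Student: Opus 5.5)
We will use the digraph $D_k$ of Figure~\ref{fig:smdundirectedgreater}. Its vertex set is $\{w_1,\dots,w_{k+1}\}\cup\{u_i,v_i \mid i\in[1,k]\}$, and its arcs are $\overrightarrow{w_iw_{i+1}}$, $\overrightarrow{v_iu_i}$, $\overrightarrow{w_iu_i}$, $\overrightarrow{w_{i+1}u_i}$, $\overrightarrow{w_{i+1}v_i}$ and $\overrightarrow{v_iw_i}$ for $i\in[1,k]$. The plan is to show $\beta_S(D_k)=k+2$ and $\beta_S(G_k)=2k+2$. In both cases the basis will be exactly the set of forced vertices, so each computation is a lower bound from forced vertices plus a check that this set is solid-resolving.

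\emph{The digraph.} The in-neighbourhoods are $N^-(w_1)=\{v_1\}$, $N^-(w_{k+1})=\{w_k\}$, $N^-(v_i)=\{w_{i+1}\}$, $N^-(w_i)=\{w_{i-1},v_i\}$ for $2\le i\le k$, and $N^-(u_i)=\{v_i,w_i,w_{i+1}\}$. By Corollary~\ref{cor:forceddegleq1}, the set $F=\{w_1,w_{k+1},v_1,\dots,v_k\}$ consists of forced vertices, so $\beta_S(D_k)\ge k+2$. We then show via Theorem~\ref{thm:srsmain} that $F$ is solid-resolving. Vertices of $F$ distinguish themselves from everything, so only the non-forced vertices $u_i$ and the middle vertices $w_i$ ($2\le i\le k$) need attention.

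For $u_i$, we use $v_i$: its only out-neighbours are $u_i$ and $w_i$, so $d(v_i,u_i)=1$ and $d(v_i,x)\ge 2$ for $x\ne u_i,w_i$. To separate $u_i$ from $w_i$, we use $w_{i+1}$, which is either in $F$ or a middle vertex reached from $v_{i+1}\in F$. Here we compare $d(\cdot,u_i)$ with $d(\cdot,w_i)$, noting that $w_i$ is reached from $w_{i+1}$ only through $v_i$.

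For a middle $w_i$, we use $v_i$ ($d(v_i,w_i)=1$), which separates $w_i$ from every vertex except $u_i$. We separate $w_i$ from $u_i$ by tracing back to a forced vertex, for example $w_1$ along the directed path $w_1\to\cdots\to w_i$. This requires checking that $d(w_1,w_i)<d(w_1,u_i)$, which holds since $u_i$ is reached from $w_1$ only via $w_i$ or $w_{i+1}$.

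This case analysis on distances is the part I expect to be most fiddly: the arcs mix directions, so each inequality must be verified by tracing the few directed paths. I would organise it as a small table of distances from each $s\in F$.

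\emph{The underlying graph.} As observed at the start of this section, $G_k$ can be viewed as a symmetric digraph, so Theorem~\ref{thm:forced-characterization} applies with $N^-=N$.

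Since $N(u_i)=\{v_i,w_i,w_{i+1}\}\subseteq N[v_i]$ and $N(v_i)=\{u_i,w_i,w_{i+1}\}\subseteq N[u_i]$, all $u_i$ and $v_i$ are forced. Since $N(w_1)=\{w_2,u_1,v_1\}\subseteq N[u_1]$, and symmetrically $N(w_{k+1})\subseteq N[u_k]$, the endpoints $w_1$ and $w_{k+1}$ are forced too. Hence $\beta_S(G_k)\ge 2k+2$.

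For the upper bound, it remains to distinguish each middle $w_i$ from every other vertex $x$:
\begin{itemize}
\item For $x$ on the side of higher index (that is, $u_j$, $v_j$ with $j\ge i$, or $w_j$ with $j>i$), use $u_{i-1}$. We have $d(u_{i-1},w_i)=1$, while every such $x$ is at distance at least $2$ from $u_{i-1}$.
\item For $x$ on the side of lower index (that is, $u_j$, $v_j$ with $j\le i-1$, or $w_j$ with $j<i$), use $u_i$ symmetrically.
\end{itemize}
Hence $\beta_S(G_k)=2k+2$, and therefore $\beta_S(G_k)-\beta_S(D_k)=(2k+2)-(k+2)=k$.
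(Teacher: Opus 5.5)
Your construction and strategy match the paper's. The forced vertices give the lower bounds $k+2$ and $2k+2$, and Theorem~\ref{thm:srsmain} is then used to check that the forced sets are solid-resolving. Your treatment of $G_k$ is complete and correct. However, the $D_k$ verification misses a case.

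You claim that $d(v_i,x)\ge 2$ for every $x\ne u_i,w_i$, and that $v_i$ separates $w_i$ from every vertex except $u_i$. Both claims fail for $x=v_i$ itself, since $d(v_i,v_i)=0$. The condition in Theorem~\ref{thm:srsmain} is asymmetric. So the fact that $v_i\in F$ ``distinguishes itself'' only gives $d(v_i,v_i)<d(v_i,u_i)$, i.e.\ it handles the ordered pair in which $v_i$ must be the closer vertex. The two ordered pairs that need some $s\in F$ with $d(s,u_i)<d(s,v_i)$, and with $d(s,w_i)<d(s,v_i)$, are never addressed. Your own argument, $v_i$ for $u_i$ and $w_i$ in general plus the separate comparison of $u_i$ with $w_i$, cannot cover them.

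These cases are easy to close. Every directed path from $w_1$ to $w_j$ must use each arc $\overrightarrow{w_\ell w_{\ell+1}}$ with $\ell<j$, so $d(w_1,w_j)=j-1$. Since $N^-(v_i)=\{w_{i+1}\}$, this gives $d(w_1,v_i)=i+1$. Meanwhile $d(w_1,u_i)=i$ and $d(w_1,w_i)=i-1$, so $w_1\in F$ handles both missing pairs. (The paper instead uses $v_{i-1}$, and $w_1$ when $i=1$, for $u_i$ versus $v_i$.) With this addition your argument is complete.
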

\begin{proof}
\sloppy
    Let $D_k$ be the digraph obtained by taking $k$ copies of a particular orientation of~$K_4$ and joining them as shown in Figure~\ref{fig:smdundirectedgreater}. To be precise, let $V(D_k) = \{u_i, v_i, w_i \mid i \in [1, k]\} \cup \{w_{k + 1}\}$ and $A(D_k) = \{\overrightarrow{w_iu_i}, \overrightarrow{v_i u_i}, \overrightarrow{w_{i + 1} u_i}, \overrightarrow{w_i w_{i + 1}}, \overrightarrow{w_{i + 1} v_i}, \overrightarrow{v_i w_i} \mid i \in [1, k]\}$. We shall determine the solid-metric dimension of both $D_k$ as well as its underlying undirected graph~$G_k$ and show that it exhibits the desired property.

    First observe that the vertices in the set $S = \{v_i \mid i \in [1, k]\} \cup \{w_1, w_{k + 1}\}$ all have in-degree 1, and hence, by Corollary~\ref{cor:forceddegleq1}, are forced. We claim that $S$ is a solid-resolving set of $D_k$. For any distinct $s \in S$ and $t \in V(D_k) \setminus \{s\}$, we trivially have $d(s, s) < d(s, t)$. Thus, it remains to distinguish any vertex $x \in V(D_k) \setminus S$ from every other vertex in $D_k$. Observe that for $i \in [1, k]$, $d(v_i, u_i) < d(v_i, v)$, where $v$ can be $w_\ell, u_j$ or $v_j$, for $\ell \in [1, k + 1]$, $\ell \neq i$ and $j \in [1, k]$, $j \neq i$. Furthermore, $d(v_{i + 1}, u_i) < d(v_{i + 1}, w_i)$ and $d(v_{j - 1}, u_j) < d(v_{j - 1}, v_j)$, for $i \in [1, k - 1], j \in [2, k]$, and $d(w_{k + 1}, u_k) < d(w_{k + 1}, w_k)$ and $d(w_1, u_1) < d(w_1, v_1)$. Hence, all vertices $u_i, i \in [1, k]$ are distinguished from every other vertex in $D_k$ by $S$. Similarly, we have that either $d(v_{i - 1}, w_i) < d(v_{i - 1}, x)$ or $d(v_i, w_i) < d(v_i, x)$, where $i \in [2, k - 1]$, will always be true depending on the chosen vertex $x, x \neq w_i$. Hence, all vertices in $V(D_k) \setminus S$ are distinguished, and $S$ is a solid-resolving set. Hence, $\beta_S(D_k) = k + 2$.

    Next observe that all the vertices in the set $S' = \{u_i, v_i \mid i \in [1, k]\} \cup \{w_1, w_{k + 1}\}$ are forced vertices of $G_k$. We claim that $S'$ is a solid-resolving set of $G_k$. Just as before, it is enough to show that the vertices in $V(G_k) \setminus S'$ are distinguished from the other vertices in $G_k$. Clearly, either $d(v_i , w_i) < d(v_i, x)$ or $d(v_{i - 1} , w_i) < d(v_{i - 1}, x)$ where $i \in [2, k - 1]$, is always true depending on the choice of the vertex $x, x \neq w_i$. Hence, all vertices in $V(G_k) \setminus S'$ are distinguished, and $S'$ is a solid-resolving set. Hence, $\beta_S(G_k) = 2k + 2$, and therefore, $\beta_S(G_k) - \beta_S(D_k) = k$, as required.
\end{proof}

Several related graph parameters have been studied, including metric dimension, strong metric dimension, fractional metric dimension, etc. It would be interesting to find examples of graphs which show that there are no trivial relations between any of these parameters, akin to the two results above.

\section{Cartesian Products of Orientations}\label{sec-cartesianproducts}
    In this section, we determine the solid-metric dimension of a Cartesian product consisting of oriented paths and cycles (with certain restrictions). As was seen in Section~\ref{sec-graphfamilies}, the solid-metric dimension of oriented paths and cycles is determined by the number of forced vertices. We will prove that this is true also for their Cartesian products. To that end, we will first consider forced vertices in Cartesian products.
    
	Let $D_1$ and $D_2$ be oriented graphs. Their \emph{Cartesian product} $D_1 \Box D_2$ is the digraph with
	\begin{align*}
		V(D_1 \Box D_2) & = V(D_1) \times V(D_2) = \{uv \mid u \in V(D_1), v \in V(D_2)\} \\
		A(D_1 \Box D_2) & = \{ (u_1v_1,u_2v_2) \mid (u_1,u_2) \in A(D_1) \text{ and } v_1=v_2 \text{, or } \\ & \qquad \qquad \qquad u_1=u_2 \text{ and } (v_1,v_2) \in A(D_2) \}
	\end{align*}
	where $\times$ is the Cartesian product of sets. The in-neighborhood of $uv \in V(D_1 \Box D_2)$ can be written as 
	$N^-(uv) = (N^-(u) \times \{v\}) \cup (\{u\} \times N^-(v))$.

	\subsection{Forced Vertices}

    We begin by considering forced vertices in Cartesian products of orientations. We prove that they can be determined from the forced vertices of the orientations the Cartesian product is comprised of. We first focus on Cartesian products of any orientations.

    \begin{theorem}\label{thm-cart-forced}
    \sloppy
		Let $D_i$, $i=1,\dots,k$, be orientations. A vertex $u = u_1 \cdots u_k \in V(D_1 \Box \cdots \Box D_k)$ is a forced vertex if and only if there exists some $i \in \{1, \ldots , k\}$ such that $u_i$ is a forced vertex in $D_i$ and $u_j$ is a source for all $j \neq i$.
	\end{theorem}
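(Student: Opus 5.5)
We would prove both directions using Theorem~\ref{thm:forced-characterization}: $u$ is forced if and only if there is some $v \neq u$ with $N^-(u) \subseteq N^-[v]$. We work with the formula
\[
N^-(u)=\bigcup_{i=1}^k \{u_1\cdots u_{i-1}\,w\,u_{i+1}\cdots u_k \mid w\in N^-(u_i)\}.
\]
So every in-neighbour of $u$ differs from $u$ in exactly one coordinate, and an arc of the product changes exactly one coordinate along an arc of that factor. We assume the product is non-trivial. Trivial factors $K_1$ are harmless: their single vertex is a source and is forced.

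\emph{Sufficiency.} Let $u_i$ be forced in $D_i$ and let $u_j$ be a source for all $j\neq i$. If $D_i$ is non-trivial, Theorem~\ref{thm:forced-characterization} gives $v_i\neq u_i$ with $N^-(u_i)\subseteq N^-[v_i]$. Let $v$ be $u$ with its $i$-th coordinate replaced by $v_i$. Since the other coordinates are sources, $N^-(u)$ consists only of vertices that differ from $u$ in coordinate $i$. Each such vertex carries some $w\in N^-(u_i)$ in coordinate $i$, and $w\in N^-[v_i]$. Hence it equals $v$ or is an in-neighbour of $v$ inside the $D_i$-fibre, so $N^-(u)\subseteq N^-[v]$. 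If $D_i$ is trivial, then $u$ is a source of the product and is forced by Corollary~\ref{cor:forceddegleq1}.

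\emph{Necessity.} Suppose $N^-(u)\subseteq N^-[v]$ with $v\neq u$. First we show that at most one coordinate $u_i$ has in-neighbours. Suppose instead $i\neq j$, $w\in N^-(u_i)$ and $w'\in N^-(u_j)$, and let $x,y\in N^-(u)$ be the corresponding vertices that change coordinate $i$, respectively $j$. Since $x$ and $y$ differ in two coordinates, neither can equal $v$ while the other is an in-neighbour of $v$, so both $x\to v$ and $y\to v$. Then $v$ agrees with $u$ outside $\{i,j\}$, and chasing the single differing coordinate leaves two possibilities. Either $v=u$, which is excluded, or $v$ has $w$ in coordinate $i$ and $w'$ in coordinate $j$. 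In the latter case the arc $x\to v$ would be $\overrightarrow{u_jw'}$ in $D_j$, while $\overrightarrow{w'u_j}$ is also an arc. This contradicts $D_j$ being an orientation, and we expect this to be the crux of the argument. Hence all coordinates except possibly one, say $i$, are sources. If all coordinates are sources, every $u_j$ is forced by Corollary~\ref{cor:forceddegleq1} and we are done.

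In the remaining case, where $u_i$ is not a source, it remains to show that $u_i$ is forced in $D_i$. If $v$ differs from $u$ only in coordinate $i$, then $N^-(u_i)\subseteq N^-[v_i]$ directly, and Theorem~\ref{thm:forced-characterization} applies. Otherwise $v$ differs from $u$ in some coordinate $j\neq i$. Then each in-neighbour $x$ of $u$ differs from $v$ at $j$, so $x\to v$ must be an arc that changes coordinate $j$. This forces $v_i=x_i$ for every $x$, so $|N^-(u_i)|=1$ and $u_i$ is forced by Corollary~\ref{cor:forceddegleq1}. The main obstacle is the two-coordinate case analysis above, which is the only place where the hypothesis that the factors are orientations is used; the rest is bookkeeping with the in-neighbourhood formula.
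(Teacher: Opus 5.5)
Your proof is correct and follows essentially the same route as the paper: for sufficiency you replace the $i$-th coordinate by the witness $v_i$ from Theorem~\ref{thm:forced-characterization}, and for necessity you use the same key argument. Two in-neighbours of $u$ that change different coordinates must both be in-neighbours of $v$, which forces $v$ to be either $u$ or the ``diagonal'' vertex, and the latter contradicts the factors being orientations. The differences are only organizational. You prove ``at most one non-source coordinate'' in a single step, where the paper splits on $\deg^-(u)\le 1$ and on whether some $\deg^-(u_i)\ge 2$. You then finish by showing $\deg^-(u_i)=1$ whenever $v$ differs from $u$ outside coordinate~$i$, which is a slightly cleaner packaging of the same case analysis.
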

	\begin{proof}
		($\Leftarrow$) If $\deg^-_{D_i} (u_i) \leq 1$, then $\deg^-(u_1 \cdots u_k) \leq 1$, and the claim follows from Corollary~\ref{cor:forceddegleq1}.
		
		Assume then that $\deg^-_{D_i} (u_i) \geq 2$ and without loss of generality that $i=1$. Since~$u_j$ is a source for all $j \geq 2$, we have $N^-(u_1 \cdots u_k) = N^-(u_1) \times \{u_2 \cdots u_k\}$. According to Theorem~\ref{thm:forced-characterization}, $u_1$ being forced implies that there exists a vertex $v_1 \neq u_1$ such that $N^-(u_1) \subseteq N^-[v_1]$. Now, $N^-(u_1 \cdots u_k) \subseteq N^-[v_1 u_2 \cdots u_k]$ and the claim follows due to Theorem~\ref{thm:forced-characterization}.
		
		($\Rightarrow$) Let $u = u_1 \cdots u_k$ be a forced vertex in $D_1 \Box \cdots \Box D_k$. According to Theorem~\ref{thm:forced-characterization}, $N^-(u) \subseteq N^-[v]$ for some $v \in V(D_1 \Box \cdots \Box D_k)$, $v \neq u$. Denote $v = v_1 \cdots v_k$.
		If $\deg^-(u) \leq 1$, then either all $u_i$ are sources or one $u_i$ has in-degree~1 and $u_j$ are sources for all $j \neq i$. In this case, $u_i$ is a forced vertex in $D_i$ and the claim follows. So assume that $\deg^-(u) \geq 2$. We have the following two possibilities:
		\begin{itemize}
			\item Assume that $\deg^-(u_i) \leq 1$ for all $i \in \{1, \ldots, k\}$. In order for $u$ to have in-degree at least~$2$, we must have $\deg^- (u_i) = 1$ for at least two~$u_i$, say, $\deg^- (u_1) = 1$ and $\deg^- (u_2) = 1$. Let $w_1$ and $w_2$ be in-neighbors of $u_1$ and $u_2$, respectively. The vertices $x_1=w_1 u_2 \cdots u_k$ and $x_2=u_1 w_2 u_3 \cdots u_k$ are in-neighbors of $u$ in the Cartesian product. Thus, they both must be in $N^- [v]$. Since $x_1$ and $x_2$ are neither in- nor out-neighbors of each other, $v \neq x_1$ and $v \neq x_2$. Thus, $x_1, x_2 \in N^-(v)$. 
			
			Since $x_1 \in N^-(v)$, we must have $v_1=w_1$ or $v_2=u_2$ (if neither holds, $x_1$ is neither an in- nor out-neighbor of $v$). Similarly, since $x_2 \in N^-(v)$, we must have $v_1=u_1$ or $v_2=w_2$. Therefore, either $v_1v_2=w_1w_2$ or $v_1v_2=u_1u_2$. Now, the component vertices of $v$ differ from those of $x_1$ and $x_2$ already in $v_1$ or $v_2$, and the remaining coordinates must all be equal, that is, $v_i=u_i$ for all $i \geq 3$. Recall that $v \neq u$, and thus $v = w_1 w_2 u_3 \cdots u_k$. However, now $x_1$ and $x_2$ are in fact out-neighbors of $v$, a contradiction.
			
			\item Assume that $\deg^-(u_i) \geq 2$ for some $i \in \{1, \ldots , k\}$, say, $\deg^-(u_1) \geq 2$. Denote the in-neighbors of $u_1$ by $t_\ell$ where $\ell \in \{1, \ldots , \deg^- (u_1) \}$.
			\begin{itemize}
				\item Suppose to the contrary that some $u_j$, $j \neq 1$ is not a source, say, $\deg^-(u_2) \geq 1$. Let $s \in N^-(u_2)$. As in the previous case, there are two in-neighbors $x_1 = t_1 u_2 \cdots u_k$ and $x_2 = u_1 s u_3 \cdots u_k$ of $u$ that lead to a contradiction. Thus, $\deg^-(u_j)=0$ for all $j \neq 1$.
				
				\item We will then show that $u_1$ is forced in $D_1$. Consider in-neighbors of $u$ that are of the form $x_\ell = t_\ell u_2 \cdots u_k$. In order for $x_\ell$ to be in $N^- [v]$ for all $\ell$, the component vertices of $v$ can differ only in one place when compared to each $x_\ell$. Thus, $v_1 = t_\ell$ for some $\ell$ or $v_j = u_j$ for all $j \geq 2$. If, say, $v_1 = t_1$, then $x_2 \in N^-[v]$ implies that $v_j = u_j$ for all $j \geq 2$. Thus, we must have $v_j = u_j$ for all $j \geq 2$ in all cases. However, now $t_\ell \in N^-[v_1]$ for all $\ell$, and thus $N^-(u_1) \subseteq N^-[v_1]$ which implies that $u_1$ is forced in $D_1$. \qedhere
			\end{itemize}
		\end{itemize}
	\end{proof}
	
	In Subsection~\ref{sec-cartpathcyc}, we focus on certain orientations of paths and cycles. The property they all have in common is that they do not have forced vertices of in-degree more than~1. The forced vertices in Cartesian products of such orientations can be easily determined from the in-degrees of vertices as described in the following corollary.

    \begin{corollary}\label{cor-cartforced}
		Let $D_i$, $i=1,\ldots,k$, be orientations such that $\deg^- (u) \leq 1$ for all forced vertices $u \in V(D_i)$.
		The vertex $u_1 \cdots u_k \in V(D_1 \Box \cdots \Box D_k)$ is a forced vertex in $D_1 \Box \cdots \Box D_k$ if and only if $\sum_{i=1}^{k} \deg^-(u_i) \leq 1$.
	\end{corollary}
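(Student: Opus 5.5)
The plan is to derive this directly from Theorem~\ref{thm-cart-forced}, using the hypothesis to replace the word ``forced'' in each factor by the in-degree condition $\deg^-(u_i)\le 1$. The first step is to observe that under the hypothesis, a vertex $u_i \in V(D_i)$ is forced in $D_i$ if and only if $\deg^-_{D_i}(u_i) \le 1$. The ``if'' direction is Corollary~\ref{cor:forceddegleq1} (applied to $D_i$, which we may assume is non-trivial; a trivial $D_i$ consists of a single source, and this case can be handled separately or absorbed since its unique vertex has in-degree $0$ and is forced by convention). The ``only if'' direction is exactly the assumption that every forced vertex of $D_i$ has in-degree at most $1$.

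With this equivalence, the condition of Theorem~\ref{thm-cart-forced} reads: there is some index $i$ with $\deg^-(u_i)\le 1$ and $\deg^-(u_j)=0$ for all $j\ne i$. I would then check that this is equivalent to $\sum_{i=1}^k \deg^-(u_i) \le 1$. Suppose the condition holds; then the sum equals $\deg^-(u_i)\le 1$. Conversely, if the sum is at most $1$, then either all $u_j$ are sources, in which case any $i$ works because a source has in-degree $0 \le 1$, or exactly one $u_i$ has in-degree $1$ and the rest are sources. In both cases the condition of Theorem~\ref{thm-cart-forced} is satisfied. Chaining these equivalences gives the corollary.

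There is essentially no obstacle. The only point needing care is the degenerate case of trivial factors in Corollary~\ref{cor:forceddegleq1}. One may also note, as a sanity check, that $\deg^-(u_1\cdots u_k)=\sum_i \deg^-(u_i)$ from the description of in-neighborhoods in the Cartesian product. The corollary therefore says that, under the hypothesis, the forced vertices of the product are exactly those of in-degree at most $1$.
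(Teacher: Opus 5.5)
Your proposal is correct and uses the same ingredients as the paper (Theorem~\ref{thm-cart-forced}, the hypothesis on the factors, and Corollary~\ref{cor:forceddegleq1}). The difference is only bookkeeping: the paper first shows that $u_1\cdots u_k$ is forced iff $\deg^-(u_1\cdots u_k)\le 1$ and then uses $\deg^-(u_1\cdots u_k)=\sum_i\deg^-(u_i)$, while you translate the factor-wise condition of Theorem~\ref{thm-cart-forced} directly.

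One small fix concerns trivial factors. Under Definition~\ref{def-solid} the vertex of $K_1$ is not forced, because the empty set is vacuously solid-resolving. So instead of appealing to a convention, in the all-sources case choose $i$ with $D_i$ non-trivial. Such an $i$ exists whenever the product is non-trivial, which is the same implicit assumption the paper's proof makes when it applies Corollary~\ref{cor:forceddegleq1} to the product.
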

	\begin{proof}
    \sloppy
		The vertex $u_1 \cdots u_k \in V(D_1 \Box \cdots \Box D_k)$ is a forced vertex if and only if $\deg^-(u_1 \cdots u_k) \leq 1$ by Theorem~\ref{thm-cart-forced} and Corollary~\ref{cor:forceddegleq1}. Since 
		\[N^-(u_1 \cdots u_k) = (N^-(u_1) \times \{u_2 \cdots u_k\}) \cup \cdots (\{u_1 \cdots u_{k-1}\} \times N^-(u_k)),\]
		we have $\deg^-(u_1 \cdots u_k) \leq 1$ if and only if $\sum_{i=1}^{k} \deg^-(u_i) \leq 1$.
	\end{proof}

    Recall that $n^-_i(D)$ denotes the number of vertices that have in-degree~$i$ in~$D$. With this notation, we can formulate the number of forced vertices in the case of Corollary~\ref{cor-cartforced} as follows.
    
	\begin{corollary}\label{cor-cartforcednumber}
		Let $D_i$, $i=1,\ldots,k$, be orientations such that $\deg^- (u) \leq 1$ for all forced vertices $u \in V(D_i)$. The number of forced vertices in $D_1 \Box \cdots \Box D_k$ is
		\[ \sum_{\mathbf{x}\in X} \prod_{i=1}^{k} n_{x_i}^- (D_i), \]
		where $X = \{(x_1,\ldots,x_k) \in \{0,1\}^k \mid \sum_{i=1}^{k} x_i \leq 1\}$.
	\end{corollary}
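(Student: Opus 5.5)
By Corollary~\ref{cor-cartforced}, under the hypothesis that every forced vertex of each $D_i$ has in-degree at most~1, a vertex $u_1 \cdots u_k$ of $D_1 \Box \cdots \Box D_k$ is forced if and only if $\sum_{i=1}^{k} \deg^-(u_i) \leq 1$. The plan is therefore a counting argument: count the tuples $(u_1,\ldots,u_k) \in V(D_1)\times\cdots\times V(D_k)$ whose in-degrees sum to at most~1.

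First, observe that in-degrees are non-negative integers. Hence $\sum_i \deg^-(u_i) \leq 1$ holds exactly when every $\deg^-(u_i)\in\{0,1\}$ and at most one of them equals~1. So the set of forced vertices is the disjoint union, over the \emph{in-degree profiles} $\mathbf{x}=(x_1,\ldots,x_k)\in X$, of the sets
\[F_{\mathbf{x}} = \{u_1\cdots u_k \mid \deg^-_{D_i}(u_i)=x_i \text{ for all } i\}.\]
These sets are pairwise disjoint, because each vertex determines its profile uniquely.

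Second, since the vertex set of the product is the Cartesian product of the vertex sets, the coordinates can be chosen independently. Thus $F_{\mathbf{x}}$ is the product of the sets $\{u\in V(D_i)\mid \deg^-(u)=x_i\}$, and so $|F_{\mathbf{x}}| = \prod_{i=1}^k n^-_{x_i}(D_i)$. Summing over $\mathbf{x}\in X$ gives the formula.

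There is essentially no obstacle here. The only step needing care is the reduction from the condition ``sum at most~1'' to ``0/1 profile with at most one~1''. This reduction is exactly what makes the index set $X$ correct. For instance, without it one might worry about a profile such as $(2,-1)$, which non-negativity of in-degrees rules out.
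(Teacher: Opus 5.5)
Your proposal is correct and matches the paper's approach. The paper gives no separate proof and presents this count as a direct reformulation of Corollary~\ref{cor-cartforced}. Your argument supplies exactly that counting: split the forced vertices by their $0/1$ in-degree profile $\mathbf{x}\in X$, and count each class as the product $\prod_{i} n^-_{x_i}(D_i)$.
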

    
	\subsection{Cartesian Products of Oriented Paths and Cycles}\label{sec-cartpathcyc}
	
	In this section, we consider arbitrary Cartesian products of oriented paths and cycles. The paths we consider have at least two vertices and the cycles have at least five vertices. Additionally, the cycles are oriented such that they have at least two sources each. We will prove that the solid-metric dimension of the Cartesian product $D_1 \Box \cdots \Box D_k$ of such paths and cycles is equal to the number of forced vertices (see Corollary~\ref{cor-cartforcednumber}). The assumptions for the number of vertices in the orientations we consider enable us to use Corollaries~\ref{cor-cartforced} and~\ref{cor-cartforcednumber}.
	
	The proof is largely based on which vertices are reached from which forced vertices. We denote the set of vertices that are reachable from $v$ by $R(v)$. In a Cartesian product $D_1 \Box \cdots \Box D_k$, we can consider the reachability between vertices through the component graphs of the product. Denote $u = u_1 \cdots u_k$ and $v = v_1 \cdots v_k$, where $u_i,v_i \in V(D_i)$. There exists a directed path from $v$ to $u$ if and only if each $u_i$ is reachable from each $v_i$.
    Thus, $R(v) = R(v_1) \times \cdots \times R(v_k)$. 
	Moreover, much like in undirected Cartesian products we have
	\[ d(v,u) = \sum_{i=1}^{k} d(v_i,u_i), \]
	where if $d(v_i,u_i) = \infty$ for some $i$, then we agree that $d(v,u) = \infty$.

	\begin{theorem}\label{thm-cart-main}
		Let $D_i$ be an oriented path with at least two vertices or an oriented cycle with at least five vertices and two sources for $i \in \{1, \ldots , k\}$. We have
		\[ \beta_S(D_1 \Box \cdots \Box D_k) =  \sum_{\mathbf{x}\in X} \prod_{i=1}^{k} n_{x_i}^- (D_i), \]
        where $X = \{(x_1,\ldots,x_k) \in \{0,1\}^k \mid \sum_{i=1}^{k} x_i \leq 1\}$
	\end{theorem}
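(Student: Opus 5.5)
The plan is to prove matching lower and upper bounds. For the lower bound, we check that the hypotheses of Corollary~\ref{cor-cartforcednumber} hold. In an oriented path the forced vertices are exactly those of in-degree at most~1 (proof of Theorem~\ref{thm:treessmd}). In an oriented cycle with $n\ge 5$ vertices, Theorem~\ref{thm:cyclessmd} applies in its generic case, and its proof shows that the forced vertices are again exactly those with in-degree at most~1. Hence every forced vertex of every $D_i$ has in-degree at most~1. By Corollary~\ref{cor-cartforced}, the forced vertices of $D=D_1\Box\cdots\Box D_k$ form the set
\[F=\{u_1\cdots u_k : \textstyle\sum_i \deg^-(u_i)\le 1\},\]
and Corollary~\ref{cor-cartforcednumber} gives $|F|$ as the displayed sum. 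Therefore $\beta_S(D)\ge |F|$, and it remains to show that $F$ is a solid-resolving set.

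For the upper bound we use Theorem~\ref{thm:srsmain}. We must show that for distinct $u=u_1\cdots u_k\notin F$ and $v=v_1\cdots v_k$ there is some $s\in F$ with $d(s,u)<d(s,v)$. We rely on additivity, $d(s,u)=\sum_i d(s_i,u_i)$, and on $R(s)=R(s_1)\times\cdots\times R(s_k)$. Since $F$ consists of vertices having one coordinate of in-degree at most~1 and all other coordinates sources, we build $s$ coordinatewise. The local structure is that of a path, or of a cycle with at least two sources. There, every vertex lies on a maximal directed segment running from a source to a sink. A sink lies on two such segments, and every other non-source vertex has in-degree~1 and lies on exactly one segment.

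For each coordinate $j$ we try to pick a source $s_j$ with $d(s_j,u_j)\le d(s_j,v_j)$. If $u_j$ is a source, take $s_j=u_j$. If $u_j$ is a sink, choose one of its two segments whose interior before $u_j$ avoids $v_j$. Otherwise take the source starting the unique segment through $u_j$. This choice fails only when $v_j$ lies strictly before $u_j$ on that unique segment; call such coordinates \emph{bad}. In a bad coordinate $j$ we have $\deg^-(u_j)=1$, and we set $s_j=u_j$, which gives $0<d(u_j,v_j)$.

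If there are no bad coordinates, pick a coordinate $i$ with $u_i\neq v_i$. Replace $s_i$ by a forced vertex $f_i$ of $D_i$ with $d(f_i,u_i)<d(f_i,v_i)$, which exists by Theorems~\ref{thm:treessmd} and~\ref{thm:cyclessmd}. Here we must check that $f_i$ can be taken to be a source or an in-degree-1 vertex compatible with the other coordinates, so that the resulting $s$ lies in $F$. If there is exactly one bad coordinate, the construction already yields $s\in F$ with strict inequality in that coordinate. The cycle conditions (at least five vertices and at least two sources) are what make the segment description and the forced-vertex characterization valid.

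The main obstacle is the case of two or more bad coordinates. The natural witness $t$, with $t_j=u_j$ on the bad coordinates and sources elsewhere, satisfies $d(t,u)<d(t,v)$ but is not in $F$. For this case I would imitate the backtracking in the proof of Theorem~\ref{thm:treessmd}. In a bad coordinate $j$, replace $t_j$ by its unique in-neighbour, moving towards the source of the segment. All directed paths from the new vertex into that segment pass through the old one, so the strict inequality $d(t,u)<d(t,v)$ should be preserved as long as $v_j$ is not passed. When $v_j$ is reached, the coordinate stops being strictly separating, but another bad coordinate still provides strictness. One keeps retracting all but one bad coordinate back to sources, which yields an element of $F$. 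Verifying that strictness survives in the final step, when the last retraction meets $v_j$, is the delicate part. There I would compare distances using $d(s,u)=\sum_i d(s_i,u_i)$ term by term.
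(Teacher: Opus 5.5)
Your lower bound (forced vertices of each $D_i$ are exactly those of in-degree at most~$1$, then Corollaries~\ref{cor-cartforced} and~\ref{cor-cartforcednumber}) is correct and matches the paper. The upper bound has two gaps.

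\textbf{Gap 1: the case of several bad coordinates.} You leave this case open, and the backtracking you propose is not needed. The missing observation concerns a bad coordinate $j$, where $\deg^-(u_j)=1$. From such a vertex of a path, or of a cycle with at least two sources, one can only move forward along its segment to the sink. So $v_j\notin R(u_j)$ and $d(u_j,v_j)=\infty$, not merely $d(u_j,v_j)>0$. Hence one bad coordinate already finishes the proof. Put $s_j=u_j$ and let $s_i$ be any source reaching $u_i$ for $i\neq j$. Then $s\in F$ and $d(s,u)<\infty$, while $d(s,v)=\infty$ because $R(s)=R(s_1)\times\cdots\times R(s_k)$. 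The other bad coordinates are irrelevant. The same observation repairs a failure you overlooked in the in-degree-$1$ case: $v_j$ may lie on the \emph{other} out-segment of the chosen source, at a smaller distance than $u_j$. This is essentially how the paper argues. If no forced vertex separates $u$ from $v$, then $v$ is reachable from every forced vertex reaching $u$, which forces $v_i\in R(u_i)$ whenever $\deg^-(u_i)=1$.

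\textbf{Gap 2: sink coordinates can fail.} Your claim that a sink coordinate never fails is false. This is exactly where the hypothesis of at least five vertices is needed; in $\overrightarrow{C_4}$ with two sources, the two sinks have the same in-neighbourhood. Your remark that the cycle hypotheses make the segment description valid does not capture this. Suppose $D_j$ is a cycle with exactly two sources, $u_j$ is one sink and $v_j=w$ is the other. Then $w$ avoids both segments into $u_j$, yet both sources may be strictly closer to $w$.

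Concretely, let $C$ be the $6$-cycle with arcs $a\to x\to c\leftarrow y\leftarrow b\to w\leftarrow a$, and let $P$ be the path $p\to q\to r$. In $C\Box P$ take $u=(c,q)$ and $v=(w,r)$. Your procedure finds no bad coordinate. Choosing $i=2$ yields $s\in\{(a,p),(a,q),(b,p),(b,q)\}$, and each of these has $d(s,u)=d(s,v)$, because $d(a,c)=d(b,c)=2$ and $d(a,w)=d(b,w)=1$.

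The paper's remedy has two cases:
\begin{itemize}
\item Some vertex $t$ of in-degree $1$ reaches $u_j$. Then $t$ does not reach $w$, so $t$ together with sources in the other coordinates is an element of $F$ at infinite distance from $v$.
\item No such $t$ exists. Then both sources are in-neighbours of $u_j$. By $|V(D_j)|\ge 5$ there is a vertex of in-degree $1$, and it lies on a segment into $w$. So one source satisfies $d(\cdot,u_j)=1<d(\cdot,w)$.
\end{itemize}
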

	\begin{proof}
		Due to Corollary~\ref{cor-cartforcednumber}, we only need to show that the forced vertices do indeed form a solid-resolving set of the Cartesian product. Denote the set of forced vertices by $S$. By Theorem~\ref{thm:srsmain}, it suffices to prove that for all distinct vertices $u$ and $v$, there exists an $s \in S$ such that $d(s,u) < d(s,v)$.
		
		Let $u=u_1 \cdots u_k$ be an arbitrary vertex. If $u \in S$, then clearly for all $v \neq u$, we have $d(u,u) < d(u,v)$. Assume that $u \notin S$. Due to the assumptions on $D_i$, every vertex in~$D_i$ is reachable from some source in $D_i$. This implies that $u$ is reachable from a source $x = x_1 \cdots x_k$, where $u_i$ is reachable from $x_i$ in each $D_i$. We assume to the contrary that there exists a vertex $v \neq u$ such that $d(s,v) \leq d(s,u)$ for all $s \in S$. In particular, $v$ is reachable from every vertex in $S$ that reaches $u$.
		
		Since the component graphs $D_i$ do not have forced vertices with in-degree~2 or more, Corollary~\ref{cor-cartforced} implies that either at least one of the vertices $u_i$ is a sink of in-degree~2 or at least two of the vertices $u_i$ are of in-degree~1. We will prove that for such indices $i$ either $v_i=u_i$ or there exists a source $x_i$ in $D_i$ such that $d(x_i,u_i) < d(x_i,v_i)$. 
		
		Assume $u_i$ is of in-degree~1. Now $x^i=x_1 \cdots x_{i-1} u_i x_{i+1} \cdots x_k$ is a forced vertex and $u$ is reachable from $x^i$. Moreover, for all $t_i$ in $D_i$, $u$ is reachable from $x_1 \cdots x_{i-1} t_i x_{i+1} \cdots x_k$ if and only if $t_i$ lies on the directed path from $x_i$ to $u_i$ (and then these are also forced vertices). Since $d(s,v) \leq d(s,u)$ for all $s \in S$, $v$ must also be reachable from the aforementioned forced vertices. This implies that either $v_i = u_i$ or a directed path from $x_i$ to $v_i$ goes through $u_i$ (in $D_i$), and thus $d(x_i,u_i) < d(x_i,v_i)$. 
		
		Assume then that, say, $u_1$ is a sink of in-degree~2. Now $u$ is reachable from two sources: $x_\ell = x_1^\ell x_2 \cdots x_k$ and $x_r = x_1^r x_2 \cdots x_k$, where $x_1^\ell$ and $x_1^r$ are the two sources in $D_1$ that reach~$u_1$. Now $v_1$ must also be reached from both of these sources, which implies that $v_1$ is a sink of in-degree~2. Assume that $v_1 \neq u_1$. Then $D_1$ is a cycle with exactly two sources and $u_1$ and $v_1$ are the two sinks. However, since the cycle has at least five vertices, there exists a vertex $t_1$ of in-degree~1. Now, if $u_1$ is reachable from $t_1$, then $v_1$ is not. This implies that there is a forced vertex in the Cartesian product that reaches $u$ but not $v$, a contradiction. Thus, no in-degree~1 vertex in $D_1$ reaches $u_1$. Thus, the two sources are both in-neighbors of $u_1$ and we have $d(x_1^\ell, u_1) < d(x_1^\ell, v_1)$ or $d(x_1^r, u_1) < d(x_1^r, v_1)$. Therefore, either $v_1=u_1$ or there exists a source $x_1$ in $D_1$ such that $d(x_1,u_1) < d(x_1,v_1)$.

		Without loss of generality, assume that $u_i$ is a sink of in-degree~2 for $1 \leq i \leq a$, has in-degree~1 for $a < i \leq b$, and is a source $x_i$ for $b < i \leq k$, where $0 \leq a \leq b \leq k$. By the considerations above, when $1 \leq i \leq b$, either $v_i=u_i$ or there exists a source $x_i$ such that $d(x_i,u_i) < d(x_i,v_i)$. Let $x_i$ be such sources if they exist. Now,
		\begin{align*} d(x,u) & = \sum_{i=1}^{k} d(x_i,u_i) = \sum_{i=1}^{b} d(x_i,u_i) + \sum_{i=b+1}^{k} d(x_i,x_i)  \leq \sum_{i=1}^{b} d(x_i,v_i)  \leq d(x,v).
		\end{align*}
		Equality in the above implies that $v_i = u_i$ for all $i \in \{1, \ldots, k\}$, and thus $v=u$, a contradiction. Therefore, we must have $d(x,u) < d(x,v)$, a contradiction to our assumption.
	\end{proof}

\begin{remark}
    The restriction on the oriented cycle in Theorem~\ref{thm-cart-main} is necessary. For example, if $D$ is an orientation of $C_n$ such that it has no sources (i.e. $v_1 \rightarrow v_2 \rightarrow \cdots \rightarrow v_n \rightarrow v_1$), then $D \Box D$ has no forced vertices and we naturally cannot use them to determine the solid-metric dimension. If $D$ is an orientation of $C_n$ such that it has exactly one source, the Cartesian product $D \Box D$ may contain forced vertices but they might not form a solid-resolving set of $D \Box D$. For example, if $D$ is an orientation of $C_5$ such that $v_1 \rightarrow v_2 \rightarrow v_3 \rightarrow v_4 \rightarrow v_5 \leftarrow v_1$, then $D \Box D$ has seven forced vertices but $\beta_S (D \Box D) = 8$.
\end{remark}

\section{Conclusions}
\label{sec-conclusions}
This work acts as an introduction to the study of solid-resolving sets of digraphs. We have defined this new concept along with many related concepts such as solid-metric dimension, forced vertices, and boundary vertices. We used these basic definitions to obtain several results for complete graphs, trees, cycles, and complete bipartite graphs. We also showed through various examples that the solid-metric dimension of a digraph does not have any trivial relation with the number of arcs, vertices, or the solid-metric dimension of its underlying undirected graph. We also characterized the solid-metric dimension of the Cartesian product of certain digraphs based on the solid-metric dimension of their components.

As with any introductory work, this paper opens up many possible directions of research. To begin with, recall that we were not able to determine the lower bounds for the solid-metric dimensions of orientations of complete graphs and complete bipartite graphs. We also considered only orientations of cycles, rather than all digraphs whose underlying graphs are cycles. Finding a closed formula for these, as well as for other simple graph families, could be interesting.
It would also be interesting to prove or disprove the existence of any relation between the solid-metric dimension of a digraph as well as its or its underlying graph's metric dimension, or any other related parameter. Additionally, the effect of arc or vertex deletion on the metric dimension of a digraph seems to be unexplored. In particular, it would be interesting to see whether 1 is the largest possible increase when one arc is removed.

Since solid-resolving sets for a digraph are a generalization of the solid-resolving sets of undirected graphs, we are interested in some problems from the perspective of real-life applications. For instance, can we characterize the graphs for which the solid-metric dimension is either the minimum or the maximum possible? Moreover, can we say something about the algorithmic problem of determining the solid-metric dimension in terms of parameterized complexity? Determining the graph classes for which the problem can be solved in polynomial time is also of interest.

\end{document}